

\documentclass[10pt]{amsart}

\usepackage{amsmath,amsfonts,amssymb,amsthm}

\usepackage{pstricks}


\newtheorem{theorem}{Theorem}[section]
\newtheorem{lemma}[theorem]{Lemma}
\newtheorem{proposition}[theorem]{Proposition}
\newtheorem{corollary}[theorem]{Corollary}

\theoremstyle{definition}
\newtheorem{definition}[theorem]{Definition}

\newtheorem{problem}[theorem]{Problem}

\theoremstyle{remark}
\newtheorem{remark}[theorem]{Remark}

\numberwithin{equation}{section}



\newcommand{\qc}[2]{\left[{#1};{#2}\right]_q}                                

\newcommand{\am}[1]{a^{-}(#1)}
\newcommand{\ap}[1]{a^{+}(#1)}
\newcommand{\az}[1]{a^{0}(#1)}

\newcommand{\amx}[1]{a_X^{-}(#1)}
\newcommand{\apx}[1]{a_X^{+}(#1)}
\newcommand{\azx}[1]{a_X^{0}(#1)}

\newcommand{\iloskal}[2]{\left<{#1},{#2}\right>}                                
\newcommand{\is}[1]{\left<{#1}\right>}                                
\newcommand{\pstwo}{\mathbb{P}}                                                 
\newcommand{\wo}{\mathbb{E}}                                                    

\newcommand{\FFF}{\textrm{\textsf{F}}}
\newcommand{\NC}{\textrm{\textsf{NC}}}

\newcommand{\nat}{\mathbb{N}}                                                   
\newcommand{\rzecz}{\mathbb{R}}                                                 
\newcommand{\zesp}{\mathbb{C}}                                                  

\begin{document}

\title[Moments and $q$-commutators]{Moments and $q$-commutators of noncommutative random vectors}

\author[K.Dro\.{z}d\.{z}ewicz]{Krzysztof Dro\.{z}d\.{z}ewicz}
\address{Wydzia{\l} Matematyki i Nauk Informacyjnych\\
Politechnika Warszawska\\
Pl. Politechniki 1\\
00-661 Warsaw, Poland} \email{K.Drozdzewicz@mini.pw.edu.pl}

\author[W.Matysiak]{Wojciech Matysiak}
\address{Wydzia{\l} Matematyki i Nauk Informacyjnych\\
Politechnika Warszawska\\
Pl. Politechniki 1\\
00-661 Warsaw, Poland} \email{matysiak@mini.pw.edu.pl}

\thanks{}

\subjclass[2000]{Primary: 46L53. Secondary: 81S05, 62H05.}

\keywords{$q$-commutator, annihilation operator, creation operator, preservation operator, moments}



\begin{abstract}
A method for computing the mixed moments of (not necessarily commutative) random vectors from the first order moments, the $q$-commutators between the annihilation and creation operators, and the $q$-commutators between the annihilation and preservation operators, is presented. The method is illustrated by a relevant characterization of $q$-Gaussian vectors.
\end{abstract}

\maketitle

\section{Introduction}\label{s:intro}

There has appeared recently a number of papers on the problem of identifying probability measures via the annihilation, preservation and creation operators associated with them. Building on previous works of Accardi, Kuo and Stan (see e.g. \cite{accakuostan1},\cite{accakuostan2}), Stan and Whitaker \cite{stanwhitaker} showed that the moments of a probability measure on $\rzecz^d$ can be recovered from two families of commutators, namely the commutators between the annihilation and creation operators, and the commutators between the annihilation and preservation operators, provided that the moments of the first order are known. The authors presented a concrete method for recovering the moments and applied it to identify one-dimensional normal distribution and one-dimensional Meixner distributions, postponing any multidimensional or noncommutative computations to some subsequent papers. In \cite{stanmeixner}, the foundations for defining the joint annihilation-preservation-creation decomposition of a finite family of not necessarily commutative random variables were laid.
The analysis of the key example from \cite{stanmeixner}, the Meixner random vector $(X,Y)$ of class $\mathcal{M}_L$, partially rests on a noncommutative extension (stated without a proof) of the method developed in \cite{stanwhitaker}. (For some recent advances on Meixner classes of non-commutative generalized  stochastic processes consult \cite{bozejkolytvynov1} and \cite{bozejkolytvynov2}.)

Our aim in this note is to provide more insight on the method when applied in dimension greater than 1. 
We present a direct multidimensional application of a slight generalization of the method from \cite{stanwhitaker}. The generalization is twofold: we allow the analyzed random variables to be not necessarily commutative and we use $q$-commutators
\[
[A;B]_q:=AB-q\cdot BA
\]
($A$ and $B$ are some operators, $q\in[-1,1]$). Next we show concretely how the generalized method works by solving a $q$-commutator problem connected with $q$-Gaussian random variables defined by Bo\.{z}ejko and Speicher in \cite{bosp}, where a remarkable $q$ version of the Fock space was introduced. As special cases, we get some characterizations of the multivariate normal distribution and of semicircular families.

\section{Noncommutative probability background}

\subsection{A joint annihilation-preservation-creation decomposition}

We follow Stan \cite{stanmeixner} when presenting the material from this Section. Let $\left(H,\iloskal{\cdot}{\cdot}\right)$ be a Hilbert space over $\rzecz$ and let $X_1,X_2,\ldots,X_d$ be symmetric, densely defined linear operators on $H$. Denote by $\mathcal{A}$ the unital algebra generated by $X_1,\ldots,X_d$ (unital means that it is assumed that $I\in\mathcal{A}$ with $I$ being the identity operator on $H$). We fix a unit vector $\phi\in H$ belonging to the domain of any $g$ from $\mathcal{A}$ (we assume its existence) and call it the \emph{vacuum vector}. The elements of $\mathcal{A}$ will be \emph{random variables} and the pair $(\mathcal{A},\phi)$ will be a \emph{probability space supported by} $H$. For any $g\in\mathcal{A}$, we will call $\wo[g]:=\iloskal{g\phi}{\phi}$ the \emph{expectation} of the random variable $g$.

One can show that for any $g\in\mathcal{A}$ there exists a polynomial $p$ in $d$ noncommutative variables such that $g=p(X_1,\ldots,X_d)$. After Stan \cite{stanmeixner} we introduce the following equivalence relation:
\begin{definition}
Let $(\mathcal{A},\phi)$ and $(\mathcal{A}^\prime,\phi^\prime)$ be two probability spaces (with expectations $\wo$ and $\wo^\prime$), supported by two Hilbert spaces $H$ and $H^\prime$. We say that random vectors $(X_1,\ldots,X_d)$ with $X_i\in\mathcal{A}$ and $(X_1^\prime,\ldots,X_d^\prime)$ with $X_i^\prime\in\mathcal{A}^\prime$ are \emph{moment equal} if for any polynomial $p$ in $d$ noncommutative variables
\[
\wo\left[p(X_1,\ldots,X_d)\right]=\wo^\prime\left[p(X_1^\prime,\ldots,X_d^\prime)\right].
\]
\end{definition}

For $n\in\nat\cup\{0\}$, define
\[
F_n=\{p(X_1,\ldots,X_d)\phi: p \textrm{ - a polynomial of total degree less than or equal to }n\}.
\]
Clearly, $F_n$ is a finite dimensional (thus closed) subspace of $H$ and $F_0\subset F_1\subset\ldots\subset H$. Put $G_{-1}:=\{0\}$, $G_0:=F_0$ and as $G_n$ take the orthogonal complement of $F_{n-1}$ into $F_{n}$ for $n\in\nat$. Each $G_n$ is the \emph{homogenous chaos space of order} $n$ generated by $X_1,\ldots,X_d$. Similarly,
\[
\mathcal{H}:=\bigoplus_{n=0}^\infty G_n
\]
is called  \emph{the chaos space generated by} $X_1,\ldots,X_d$. It is easily seen that $\mathcal{H}$ is the closure of $\mathcal{A}\phi:=\{g\phi:g\in\mathcal{A}\}$ in $H$.

Since $X_1,\ldots,X_d$ are symmetric operators, it is not difficult to prove (see \cite{accakuostan1}) that $X_i G_n\perp G_k$ for any $i\in\{1,\ldots,d\}$, $n\in\nat\cup\{0\}$ and non-negative integer $k\notin\{n-1,n,n+1\}$. It implies for any $i\in\{1,\ldots,d\}$ the existence of three operators $\am{i}$, $\az{i}$ and $\ap{i}$ such that
\begin{equation}\label{xexpans}
X_i=\am{i}+\az{i}+\ap{i}
\end{equation}
(the domain of the operators in \eqref{xexpans} is $\mathcal{A}\phi$). We will call them the \emph{annihilation}, \emph{preservation} and \emph{creation} operators, respectively, what agrees with the fact that for $i\in\{1,\ldots,d\}$ and $n\in\nat\cup\{0\}$,
\[
\am{i}(G_n)\subset G_{n-1},\ \az{i}(G_n)\subset G_n\textrm{ and } \ap{i}(G_n)\subset G_{n+1}.
\]

For the future reference, we now list some easily derived properties of the introduced operators. We shall be using the symmetry of $\az{i}$ and the duality between $\ap{i}$'s and $\am{i}$'s, particularly the identities:
\begin{equation}\label{sprzez}
\iloskal{a^0(i)\phi}{\phi}=\iloskal{\phi}{a^0(i)\phi},\quad  \iloskal{a^+(i)\phi}{\phi}=\iloskal{\phi}{a^-(i)\phi}, \quad i=1,\ldots,d.
\end{equation}
We also will need the following consequences of the fact that $\phi\in G_0$:
\begin{equation}\label{a0}
a^-(i)\phi=0\ \textrm{ and }\ a^0(i)\phi=\wo[X_i]\phi, \quad i=1,\ldots,d.
\end{equation}
Similarly, it is easily observed that if $n\in\nat$ and $\sigma:\{1,\ldots,n\}\to\{1,\ldots,d\}$, then
\begin{equation}\label{moreminuses}
\am{\sigma(1)}\ldots \am{\sigma(k)} X_{\sigma(k+1)}\ldots X_{\sigma(n)}\phi=0
\end{equation}
for $k>\lfloor n/2\rfloor$.

\begin{remark}\label{r:przemienny}
If $X_1,\ldots X_d$ are classical (commutative) random variables defined on the same probability space $(\Omega,\mathcal{F},\pstwo)$ and having finite moments of any order, one can consider $H=L^2(\Omega,\mathcal{F},\pstwo)$ and $\phi\equiv 1$ (the constant random variable, equal to 1), and regard $X_1,\ldots,X_d$ as multiplication operators on the space $\mathcal{A}1\subset H$. Obviously, $X_i X_j=X_j X_i$ for all $1\le i\le j\le d$.
\end{remark}

\subsection{Catalan sequences}
If $n\in\nat$, $\varepsilon=(\varepsilon(1),\ldots,\varepsilon(2n))\in\{1,-1\}^{2n}$ and $k\in\{1,\ldots,2n\}$, set $\tau_{k}=\sum_{j=k}^{2n}\varepsilon(j)$. We say that $\varepsilon$ is a \emph{Catalan sequence} if $\tau_{2n}>0$, $\tau_{2n-1}\ge0,\ldots,\tau_{2}\ge0$, $\tau_1=0$ and let $C(2n)$ be the set of such sequences.

The Catalan sequences are important in the context of this note for the following reason. If $X_1,\ldots,X_{2n}$ are random variables from a probability space $\left(\mathcal{A},\phi\right)$ and $\varepsilon=(\varepsilon(1),\ldots,\varepsilon(2n))\in\{1,-1\}^{2n}$, then it is easily seen that
\begin{equation*}
\iloskal{a^{\varepsilon(1)}(1)\ldots a^{\varepsilon(2n)}(2n)\phi}{\phi}=0
\end{equation*}
unless $\varepsilon\in C(2n)$ (here we adopt the convention that $a^{+1}(k)=a^{+}(k)$ and $a^{-1}(k)=a^{-}(k)$ for any $k$ , thus $a^{\pm 1}(k)$ are connected with $X_k$ via \eqref{xexpans}).

\subsection{$q$-Fock spaces and $q$-Gaussian random variables}

We briefly outline here the basics of Bo\.{z}ejko and Speicher \cite{bosp} theory of $q$-Fock spaces. For a real Hilbert space $H_0$ with complexification $H$, its $q$-Fock space $\Gamma_q(H)$ is defined as the closure of the algebraic sum $\zesp\phi\oplus\bigoplus_n H^{\otimes n}$ in the scalar product
\[
\iloskal{f_1\otimes\ldots\otimes f_n}{g_1 \otimes\ldots\otimes g_m}_q=\delta_{m,n}\sum_{\pi\in S_n} q^{|\pi|}\cdot\prod_{j=1}^n \iloskal{f_j}{g_{\pi(j)}},
\]
where $\phi$ is the vacuum vector, $S_n$ are permutations of $\{1,\ldots,n\}$ and
\[
|\pi|=\#\{(i,j):i<j,\pi(i)>\pi(j)\}.
\]
The range of $q$ is $[-1,1]$ -- if $q=-1$ or $q=1$ one must first divide out by the null space, and then one obtains the usual antisymmetric (Fermion) and symmetric (Boson) Fock space. For $q=0$ the construction gives the full (Voiculescu) Fock space.

Now, for $f\in H$ we define the creation operator $a^+(f)$, and the annihilation operator $a^-(f)$ on $\zesp\phi\oplus\bigoplus_n H^{\otimes n}$ in the following way:
\begin{eqnarray*}
a^+(f)\phi &=& f,\\
a^+(f) f_1\otimes\ldots\otimes f_n &=& f\otimes f_1\otimes\ldots\otimes f_n,
\end{eqnarray*}
and
\begin{eqnarray*}
a^-(f)\phi &=& 0,\\
a^-(f) f_1\otimes\ldots\otimes f_n &=& \sum_{j=1}^n q^{j-1}\cdot\iloskal{f}{f_j}\cdot f_1\otimes\ldots f_{j-1}\otimes f_{j+1}\otimes\ldots\otimes f_n.
\end{eqnarray*}
If $q<1$ then $a^+(f)$ and $a^-(f)$ extend to bounded operators on $\Gamma_q(H)$ and $a^-(f)$ is $\iloskal{\cdot}{\cdot}_q$-adjoint of $a^+(f)$. They satisfy the commutation relation $\qc{a^-(f)}{a^+(g)}=\iloskal{f}{g}I$.

The self-adjoint operators $x(f)=a^+(f)+a^-(f)$ are called $q$-Gaussians. 

\section{The main result}

\subsection{A characterization}

Having introduced the joint annihilation-pre\-ser\-va\-tion-creation decomposition, we are in a position to state the main result of the paper.

\begin{theorem}\label{t:characterization}
Let $\left(\mathcal{A},\phi\right)$ and $\left(\mathcal{B},\psi\right)$ be probability spaces with the expectations $\wo$ and $\wo^\prime$, supported by Hilbert spaces
$H$ and $H^\prime$, respectively. Random vectors $X=(X_1,\ldots,X_d)$ (with $X_i\in\mathcal{A}$) and $Y=(Y_1,\ldots,Y_d)$ (with $Y_i\in\mathcal{B}$) are moment equal if and only if the following three conditions hold for all $i,j\in\{1,\ldots,d\}$:
\begin{eqnarray}
\qc{a_X^-(i)}{a_X^+(j)}&=&\qc{a_Y^-(i)}{a_Y^+(j)},\label{anikrea}\\
\qc{a_X^-(i)}{a_X^0(j)}&=&\qc{a_Y^-(i)}{a_Y^0(j)},\label{anizacho}\\
\wo[X_i]&=&\wo^\prime[Y_i]\label{equimom}.
\end{eqnarray}
(Here and later, the symbols $a_U^-(i)$, $a_U^0(i)$ and $a_U^+(i)$ will stand for the annihilation, preservation and creation operators associated with the $i$-th coordinate of a random vector $U$ via \eqref{xexpans}.)
\end{theorem}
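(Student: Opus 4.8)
The plan is to prove the harder implication (the three conditions imply moment equality) by establishing something stronger: an explicit recursion that expresses every mixed moment $\wo[X_{i_1}\cdots X_{i_n}]$ in terms of the data \eqref{anikrea}--\eqref{equimom}. Writing each $X_{i_k}=\am{i_k}+\az{i_k}+\ap{i_k}$ and expanding the product $X_{i_1}\cdots X_{i_n}\phi$, a moment becomes a sum of inner products $\iloskal{a^{\varepsilon(1)}(i_1)\cdots a^{\varepsilon(n)}(i_n)\phi}{\phi}$ with $\varepsilon\in\{-1,0,+1\}^n$. The reverse implication is routine: if the moments agree, then the maps $p(X_1,\ldots,X_d)\phi\mapsto p(Y_1,\ldots,Y_d)\psi$ extend to an isometry of chaos spaces intertwining the two decompositions, so all three families of $q$-commutators coincide. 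I expect the bulk of the work, and of the writing, to be in the forward direction.

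First I would set up the bookkeeping: since $\am{i}\phi=0$, the leftmost surviving operator in any nonzero term cannot be an annihilator, and by \eqref{moreminuses} and the Catalan-sequence observation the pattern $\varepsilon$ of $\pm1$'s among the non-preservation letters must be a Catalan-type word (reading right to left, partial sums of $\varepsilon$ stay nonnegative and return to $0$). The key algebraic move is to push annihilators to the right through creators and preservers using the $q$-commutators: $\am{i}\ap{j}=q\,\ap{j}\am{i}+\qc{\am{i}}{\ap{j}}$ and $\am{i}\az{j}=q\,\az{j}\am{i}+\qc{\am{i}}{\az{j}}$. Since $\qc{\am{i}}{\ap{j}}$ maps $G_n\to G_n$ and $\qc{\am{i}}{\az{j}}$ maps $G_n\to G_{n-1}$, each application either lowers the total "height" of the word or reduces the number of $\ap{}$'s standing to the right of the chosen $\am{}$; coupled with \eqref{a0} to evaluate $\az{i}\phi=\wo[X_i]\phi$ at the end, this gives a terminating recursion whose only inputs are the two $q$-commutator families and the first moments. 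The same recursion, run with $Y$ in place of $X$, produces the same numbers by hypothesis, so $\wo[X_{i_1}\cdots X_{i_n}]=\wo^\prime[Y_{i_1}\cdots Y_{i_n}]$ for every word, which is exactly moment equality by linearity.

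The main obstacle is making the induction genuinely well-founded: a naive "move all annihilators right" can stall because $\qc{\am{i}}{\ap{j}}$ is itself some operator on the chaos space that need not be a scalar, so after producing it one still has a product of $a$-operators to evaluate, and one must check that the induction parameter (I would use the pair: total degree $n$, then number of creation letters, ordered lexicographically, or alternatively the number of inversions between annihilators and the creators/preservers to their right) strictly decreases at every step and that no step introduces letters of higher degree. One must also handle carefully the vacuum expectations of $\qc{\am{i}}{\ap{j}}$ and $\qc{\am{i}}{\az{j}}$ themselves — these are determined by lower-order data via \eqref{sprzez}, \eqref{a0} — and verify that the recursion never requires knowing $\az{i}$ or $\ap{i}$ in isolation, only the commutators and $\wo[X_i]$. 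Once the recursion is shown to terminate with all intermediate quantities expressed in the allowed data, the theorem follows, and the promised characterizations of the multivariate normal and semicircular families will drop out by specializing the $q$-commutators in the next subsection.
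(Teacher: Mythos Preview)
Your overall plan for both directions is on target, and your treatment of the ``only if'' direction (moment equality $\Rightarrow$ equal $q$-commutators via the Gram--Schmidt/isometry argument) is essentially what the paper does. The gap is in your implementation of the ``if'' direction.

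You propose to expand \emph{every} factor $X_{i_k}$ into $a^-+a^0+a^+$ and then normal-order the resulting words in the $a^\varepsilon$'s. This creates two problems you do not resolve. First, after full expansion there are words with adjacent annihilators, and pushing the leftmost $a^-$ to the right forces you through a commutator $\qc{a^-(i)}{a^-(j)}$, which is \emph{not} among the hypothesis data; you never say how these are handled. Second, once a commutator operator $C=\qc{a^-(i)}{a^\varepsilon(j)}$ is produced in the middle of a word of $a^\varepsilon$'s, the word is no longer a word in the three letters $a^-,a^0,a^+$, and none of your proposed induction parameters (degree, number of creators, inversions) is defined for such mixed strings, so well-foundedness is not established.

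The paper avoids both problems by \emph{not} expanding fully. Only the leftmost factor $X_{\sigma(1)}$ is decomposed; the remaining $Z_2,\ldots,Z_k$ stay as $X$'s. The key technical device is a $q$-product rule (their Lemma~\ref{l:qprodrule}),
\[
\qc{Y}{U_1\cdots U_n}=\sum_{i=0}^{n-1} q^i\, U_1\cdots U_i\,\qc{Y}{U_{i+1}}\,U_{i+2}\cdots U_n + (q^n-q)\,U_1\cdots U_n Y,
\]
applied with $Y=a^-(\sigma(1))$ and $U_j=Z_{j+1}$. Because the surrounding factors are still the $X$'s, the observation that $\qc{a^-(\sigma(1))}{X_{\sigma(i)}}$ maps $F_{k-i}$ into $F_{k-i}$ immediately shows each resulting term is the expectation of a polynomial of degree $\le k-2$ in the $X$'s, which is exactly what makes the induction on $k$ well-founded. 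Only then is $X_{\sigma(i)}$ inside the bracket split into $a^-+a^0+a^+$; the $\varepsilon\in\{0,+\}$ pieces are the hypothesis data, and the paper shows the $\varepsilon=-$ pieces vanish by iterating the same move and noting that the number of $Z$'s to the right of the rightmost annihilator strictly decreases until an annihilator hits $\phi$. Your proposal is missing both the $q$-product rule and this vanishing argument for the $\qc{a^-}{a^-}$ contributions.
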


Theorem \ref{t:characterization} generalizes the main result of \cite{stanwhitaker} to $q$-commutators (note that there are no assumptions imposed on $q$ -- it can be an arbitrary real or complex number) and to not necessarily commutative random variables.

\subsection{Proof of the main result}

\subsubsection{Notation}
Here we present some notation to be used in the remainder of the paper.

While working with mixed moments of the form
\begin{equation}\label{moment}
\wo\left[X_{\sigma(1)}\ldots X_{\sigma(k)}\right]=\iloskal{X_{\sigma(1)}\ldots X_{\sigma(k)}\phi}{\phi}
\end{equation}
with $k\in\nat$ and $\sigma$ mapping $\{1,\ldots,k\}$ into $\{1,\ldots,d\}$, we will always put $Z_j=X_{\sigma(j)}$ for $j=1,\ldots,k$, so \eqref{moment} will transform into $\iloskal{Z_1\ldots Z_k\phi}{\phi}$.

Next, for the sake of brevity, we introduce the following notation:
\[
i:=Z_i=X_{\sigma(i)}, i^-:=\amx{\sigma(i)}, i^0:=\azx{\sigma(i)}, i^+:=\apx{\sigma(i)};
\]
if $A$ and $B$ are any operators, then $\is{A}$ will stand for $\iloskal{A\phi}{\phi}$, while the composition $AB$ understood as $(AB)x=A(Bx)$ will be denoted using comma, i.e. $A,B:=AB$. For example,
\[
\is{1^-,2^-,\qc{3^-}{4},5,6}:=\iloskal{\amx{\sigma(1)}\amx{\sigma(2)}\qc{\amx{\sigma(3)}}{X_{\sigma(4)}}X_{\sigma(5)}X_{\sigma(6)}\phi}{\phi}.
\]


\subsubsection{Proof of Theorem \ref{t:characterization}}
Essentially the proof will follow the same line of reasoning as the proof of the main result of Stan and Whitaker \cite{stanwhitaker}. However, at some points we will need to take additional care of the issues arising from the lack of commutativity, and from the fact of using $q$-commutation relations.

The following Lemma generalizes "the product rule for commutators" \cite[Lemma 3.2]{stanwhitaker} to $q$-commutators:

\begin{lemma}\label{l:qprodrule}
For any operators $Y,U_1,\ldots,U_n$
\[
\qc{Y}{U_1\ldots U_n}=\sum_{i=0}^{n-1} q^i\cdot U_1\ldots U_i\qc{Y}{U_{i+1}}U_{i+2}\ldots U_n + (q^n-q)\cdot U_1\ldots U_nY.
\]
\end{lemma}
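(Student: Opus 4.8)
The natural approach is induction on $n$, the number of factors in the product $U_1\ldots U_n$. The base case $n=1$ is trivial: the sum on the right has a single term $q^0\cdot\qc{Y}{U_1}=\qc{Y}{U_1}$, and the correction term is $(q^1-q)\cdot U_1 Y = 0$, so the identity reduces to $\qc{Y}{U_1}=\qc{Y}{U_1}$. (It is also instructive to check $n=2$ directly: $\qc{Y}{U_1U_2}=YU_1U_2-q U_1U_2Y$, while the right-hand side is $\qc{Y}{U_1}U_2 + q U_1\qc{Y}{U_2} + (q^2-q)U_1U_2Y = (YU_1-qU_1Y)U_2 + qU_1(YU_2-qU_2Y) + (q^2-q)U_1U_2Y$; expanding, the $YU_1U_2$ terms match, the $-qU_1YU_2$ and $+qU_1YU_2$ cancel, and $-q^2U_1U_2Y + (q^2-q)U_1U_2Y = -qU_1U_2Y$, as required.)

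For the inductive step, I would write $U_1\ldots U_{n+1} = (U_1\ldots U_n)U_{n+1}$ and apply the elementary ``Leibniz-type'' identity for $q$-commutators, namely
\[
\qc{Y}{AB} = \qc{Y}{A}B + q\cdot A\qc{Y}{B} + (q^2 - q)\cdot ABY,
\]
which is exactly the $n=2$ case verified above, valid for any operators $A,B$. Taking $A = U_1\ldots U_n$ and $B = U_{n+1}$ gives
\[
\qc{Y}{U_1\ldots U_{n+1}} = \qc{Y}{U_1\ldots U_n}U_{n+1} + q\cdot U_1\ldots U_n\qc{Y}{U_{n+1}} + (q^2-q)\cdot U_1\ldots U_{n+1}Y.
\]
Now substitute the induction hypothesis for $\qc{Y}{U_1\ldots U_n}$ into the first term. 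This turns $\qc{Y}{U_1\ldots U_n}U_{n+1}$ into $\sum_{i=0}^{n-1} q^i U_1\ldots U_i\qc{Y}{U_{i+1}}U_{i+2}\ldots U_{n+1} + (q^n-q)U_1\ldots U_n Y U_{n+1}$. The sum here contributes the terms of the desired formula for indices $i=0,\ldots,n-1$, and the term $q\cdot U_1\ldots U_n\qc{Y}{U_{n+1}}$ supplies the missing $i=n$ term; so all that remains is to collect the leftover pieces and check they combine to $(q^{n+1}-q)U_1\ldots U_{n+1}Y$.

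\textbf{The main obstacle.} The only delicate point is the bookkeeping of the ``remainder'' terms: after substitution we are left with $(q^n-q)U_1\ldots U_n Y U_{n+1}$ together with $(q^2-q)U_1\ldots U_{n+1}Y$, and these must be reconciled with the target remainder $(q^{n+1}-q)U_1\ldots U_{n+1}Y$. The point is that $U_1\ldots U_n Y U_{n+1}$ is \emph{not} of the form $U_1\ldots U_{n+1}Y$, so one cannot simply add coefficients; instead one rewrites $Y U_{n+1} = \qc{Y}{U_{n+1}} + q\,U_{n+1}Y$, which converts $(q^n-q)U_1\ldots U_n Y U_{n+1}$ into $(q^n-q)U_1\ldots U_n\qc{Y}{U_{n+1}} + (q^{n+1}-q^2)U_1\ldots U_{n+1}Y$. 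The first of these adjusts the coefficient of the $i=n$ term from $q$ to $q + (q^n - q) = q^n$, matching the formula, and the second combines with $(q^2-q)U_1\ldots U_{n+1}Y$ to give exactly $(q^{n+1}-q)U_1\ldots U_{n+1}Y$. This completes the induction. The whole argument is purely formal algebra in the operator algebra (no analytic or domain subtleties intervene, since all operators act on the common invariant domain), so once the remainder-term arithmetic is organized correctly the proof is immediate.
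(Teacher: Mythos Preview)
Your proof is correct and follows essentially the same route as the paper: induction on $n$, using the $n=2$ identity $\qc{Y}{AB}=\qc{Y}{A}B+qA\qc{Y}{B}+(q^2-q)ABY$ with $A=U_1\cdots U_n$, $B=U_{n+1}$, then rewriting the leftover $U_1\cdots U_n Y U_{n+1}$ via $YU_{n+1}=\qc{Y}{U_{n+1}}+qU_{n+1}Y$ to reconcile the remainder terms. The only cosmetic difference is that the paper takes $n=2$ as the base case while you start at $n=1$; the inductive step and the bookkeeping of the $(q^n-q)$ correction are identical.
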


\begin{proof}
We proceed by induction on $n$. It is easy to check that for any operators $Y,V_1,V_2$
\begin{equation}\label{7}
\qc{Y}{V_1V_2}=\qc{Y}{V_1}V_2+q\cdot V_1\qc{Y}{V_2}+(q^2-q)\cdot V_1V_2Y,
\end{equation}
so the result holds for $n=2$. Assume it holds for some $n\ge2$. Putting $V_1=U_1\ldots U_n$ and $V_2=U_{n+1}$ in \eqref{7} we get
\begin{equation*}
\qc{Y}{U_1\ldots U_{n+1}}=\qc{Y}{U_1\ldots U_n}U_{n+1}+q\cdot U_1\ldots U_n\qc{Y}{U_{n+1}}+(q^2-q)\cdot U_1\ldots U_{n+1}Y.
\end{equation*}
By the induction assumption
\begin{multline*}
\qc{Y}{U_1\ldots U_{n+1}}=\sum_{i=0}^{n-1} q^i\cdot U_1\ldots U_i\qc{Y}{U_{i+1}}U_{i+2}\ldots U_{n+1}+
\\(q^n-q)\cdot U_1\ldots U_nYU_{n+1}+
q\cdot U_1\ldots U_n\qc{Y}{U_{n+1}}+(q^2-q)\cdot U_1\ldots U_{n+1}Y.
\end{multline*}
Since
$
U_1\ldots U_n Y U_{n+1}=U_1\ldots U_n\qc{Y}{U_{n+1}}+q\cdot U_1\ldots U_{n+1}Y,
$
we obtain
\[
\qc{Y}{U_1\ldots U_{n+1}}=\sum_{i=1}^n q^i\cdot U_1\ldots U_i\qc{Y}{U_{i+1}}U_{i+2}\ldots U_{n+1} + (q^{n+1}-q)\cdot U_1\ldots U_{n+1}Y,
\]
and the proof is complete.
\end{proof}

\begin{proof}[Proof of Theorem \ref{t:characterization}]
Assume first that $X$ and $Y$ are moment equal. It is not difficult to observe that the creation, preservation and annihilation operators associated with a random vector $U$ are defined in terms of the orthogonal projections of the chaos space $\mathcal{H}$ generated by $U_1,\ldots,U_d$ on the relevant homogenous chaos spaces $G_n$ of order $n$ (consult \cite{accakuostan1} for a detailed treatment of fully analogous commutative case). The projections in turn, from the Gram-Schmidt orthogonalizing procedure, are determined uniquely by the mixed moments of the vector $U$. Thus, from the fact that $X$ and $Y$ are moment equal it follows that their creation, preservation and annihilation operators are identical, so \eqref{anikrea} and \eqref{anizacho} hold.

Assume now \eqref{anikrea}, \eqref{anizacho} and \eqref{equimom}. Fix $k\in\nat$ and $\sigma(1),\ldots,\sigma(k)\in\{1,\ldots,d\}$. We will prove that
\[
\wo\left[X_{\sigma(1)}\ldots X_{\sigma(k)}\right]=\wo^\prime\left[Y_{\sigma(1)}\ldots Y_{\sigma(k)}\right]
\]
by showing that the expectation $\wo[X_{\sigma(1)}\ldots X_{\sigma(k)}]$ (and $\wo^\prime[Y_{\sigma(1)}\ldots Y_{\sigma(k)}]$) can be expressed in terms of the expectations of monomials of lower degrees, and that the monomials depend only on the $q$-commutators between the annihilation and creation operators, and the $q$-commutators between the annihilation and preservation operators. (Thus, by \eqref{anikrea} and \eqref{anizacho} the monomials will be identical when determined for $X$ using $\wo$ and for $Y$ using $\wo^\prime$.) Repeating the process of degree lowering, we will eventually arrive at the expectations $\wo[X_i]$ and $\wo^\prime[Y_i]$ which are equal by \eqref{equimom}. This will end the inductive reasoning.

By \eqref{xexpans}, \eqref{sprzez} and \eqref{a0},
\begin{multline*}
\wo\left[X_{\sigma(1)}\ldots X_{\sigma(k)}\right]=
\iloskal{Z_1\ldots Z_k\phi}{\phi}=\\
\iloskal{\amx{\sigma(1)}Z_2\ldots Z_k\phi}{\phi}+\iloskal{\azx{\sigma(1)} Z_2\ldots Z_k\phi}{\phi}+\iloskal{\apx{\sigma(1)}Z_2\ldots Z_k}{\phi}=\\
\iloskal{\amx{\sigma(1)}Z_2\ldots Z_k\phi}{\phi}+\iloskal{ Z_2\ldots Z_k\phi}{\azx{\sigma(1)}\phi}+\iloskal{Z_2\ldots Z_k}{\amx{\sigma(1)}\phi}=\\
\iloskal{\amx{\sigma(1)}Z_2\ldots Z_k\phi}{\phi}+\wo\left[X_{\sigma(1)}\right]\cdot\iloskal{ Z_2\ldots Z_k\phi}{\phi}.
\end{multline*}
Since $\iloskal{Z_2\ldots Z_k\phi}{\phi}$ is the expectation of a monomial of degree $k-1$ (in fact, the expectation of monomial $X_{\sigma(2)}\ldots X_{\sigma(k)}$), we will focus on $\iloskal{\amx{\sigma(1)}Z_2\ldots Z_k\phi}{\phi}$.

From \eqref{a0} and Lemma \ref{l:qprodrule} it follows that
\begin{multline*}
\iloskal{\amx{\sigma(1)}Z_2\ldots Z_k\phi}{\phi}=\is{1^-,2,\ldots,k}=\is{\qc{1^-}{2,\ldots,k}}=\\
\sum_{i=2}^{k} q^{i-2}\cdot\is{2,\ldots,i-1,\qc{1^-}{i},i+1,\ldots,k}+(q^{k-2}-q)\cdot\is{2,\ldots,k,1^-}.
\end{multline*}
Observe that the last term vanishes by another application of \eqref{a0}. Since the operator $\qc{1^-}{i}=\qc{\amx{\sigma(1)}}{X_{\sigma(i)}}$ maps $F_{k-i}$ into $F_{k-i}$, the expression
\begin{equation}\label{wyrazenie}
\is{2,\ldots,i-1,\qc{1^-}{i},i+1,\ldots,k}
\end{equation}
is the expectation of a polynomial of degree $k-2$ (for all $i=2,\ldots,k$). Thus
we have expressed $\wo[X_{\sigma(1)}\ldots X_{\sigma(k)}]$ in terms of expectations of some monomials of lower degrees.

What is left to show is that the monomials of lower degrees hidden in \eqref{wyrazenie} depend only on the $q$-commutators $\qc{\amx{\cdot}}{\apx{\cdot}}$ and $\qc{\amx{\cdot}}{\azx{\cdot}}$ from assumptions \eqref{anikrea} and \eqref{anizacho}. To this end, we first expand $\qc{1^-}{i}$ via \eqref{xexpans}:
\[
\qc{1^-}{i}=\qc{\amx{\sigma(1)}}{X_{\sigma(i)}}=\qc{\amx{\sigma(1)}}{\amx{\sigma(i)}+\azx{\sigma(i)}+\apx{\sigma(i)}},
\]
so $\iloskal{\amx{\sigma(1)}Z_2\ldots Z_k\phi}{\phi}$ becomes the sum of three terms of the form
\begin{equation}\label{sumazepsilonem}
\sum_{i=2}^{k} q^{i-2}\cdot\is{2,\ldots,i-1,\qc{\amx{\sigma(1)}}{a_X^\varepsilon(\sigma(i))},i+1,\ldots,k}
\end{equation}
with $\varepsilon\in\{-,0,+\}$. Of course, the choices $\varepsilon=0$ and $\varepsilon=+$ do not require any further arguments, so we focus on the case $\varepsilon=-$. (The point is that we do not assume that $\qc{a_X^-(\cdot)}{a_X^-(\cdot)}=\qc{a_Y^-(\cdot)}{a_Y^-(\cdot)}$.) We have
\begin{multline}\label{6}
\sum_{i=2}^{k} q^{i-2}\cdot\is{2,\ldots,i-1,\qc{\amx{\sigma(1)}}{\amx{\sigma(i)}},i+1,\ldots,k}=\\
\sum_{i=2}^{k} q^{i-2}\cdot\is{2,\ldots,i-1,1^-,i^-,i+1,\ldots,k}-
\sum_{i=2}^{k} q^{i-1}\cdot\is{2,\ldots,i-1,i^-,1^-,i+1,\ldots,k}.
\end{multline}
Consider the first sum after the equality sign in \eqref{6}. Observe that there are $k-i$ operators $Z$ that follow the rightmost annihilator in the product $2,\ldots,i-1,1^-,i^-,i+1,\ldots,k$. Lemma \ref{l:qprodrule} and formula \eqref{a0} imply that
\begin{multline*}
\sum_{i=2}^{k} q^{i-2}\cdot\is{2,\ldots,i-1,1^-,i^-,i+1,\ldots,k}=\\
\sum_{i=2}^{k-1} q^{i-2}\cdot\is{2,\ldots,i-1,1^-,\qc{i^-}{i+1,\ldots,k}}=\\
\sum_{i=2}^{k-1} q^{i-2}\sum_{j=i+1}^{k} q^{j-i-1}\cdot\is{2,\ldots,i-1,1^-,i+1,\ldots,j-1,\qc{i^-}{j},j+1,\ldots,k}.
\end{multline*}
We use \eqref{xexpans} again, and we again are interested only in
\begin{multline*}
\is{2,\ldots,i-1,1^-,i+1,\ldots,j-1,\qc{i^-}{j^-},j+1,\ldots,k}=\\
\is{2,\ldots,i-1,1^-,i+1,\ldots,j-1,i^-,j^-,j+1,\ldots,k}-\\q\cdot\is{2,\ldots,i-1,1^-,i+1,\ldots,j-1,j^-,i^-,j+1,\ldots,k}.
\end{multline*}
The number of the operators $Z$ that follow the rightmost annihilator in the product $2,\ldots,i-1,1^-,i+1,\ldots,j-1,i^-,j^-,j+1,\ldots,k$ (and $2,\ldots,i-1,1^-,i+1,\ldots,j-1,j^-,i^-,j+1,\ldots,k$) equals $k-j$, so it is strictly smaller than $k-i$ for all $j=i+1,\ldots,k$. Therefore repeating the above steps we will finally arrive at some products with the rightmost terms being some annihilators - the expectations of the products will vanish because of \eqref{a0}.

Clearly, the same reasoning can be applied to the second sum on the right-hand side of \eqref{6}.

Thus we have shown that the degree reduction procedure depends only on the $q$-commutators between the annihilation and creation operators, and the $q$-commutators between the annihilation and preservation operators, completing the induction.
\end{proof}


\section{An example}

Let $\left(\mathcal{A},\phi\right)$ be a probability space with the expectation $\wo$ supported by a Hilbert space $H$, and let $q\in[-1,1]$. In order to present the algorithm described in the proof of Theorem \ref{t:characterization} in action, we will solve the following $q$-commutator problem:

\begin{problem}\label{t:qnormal}
Compute the moments of random vector $X=(X_1,\ldots,X_d)$ if $X_i\in\mathcal{A}$ are such that
\begin{eqnarray*}
\left[\am{i};\ap{j}\right]_{q}&=&c(i,j)\cdot I,\\
\left[\am{i};\az{j}\right]_{q}&=&0,\\
\wo[X_i]&=&0,
\end{eqnarray*}
for $i,j=1,\ldots,d$, and the matrix $(c(i,j))_{i,j=1}^d$ is positive definite.
\end{problem}

Some special cases of Problem \ref{t:qnormal} are discussed in Subsection \ref{s:concluding}.

\subsection{A solution to Problem \ref{t:qnormal}}\label{ss:qnormal}

\subsubsection{Feynman diagrams} We now recall some basic facts on Feynman diagrams that are needed to solve Problem \ref{t:qnormal}; the notation we use is almost identical with the one from \cite[Section 2]{effrospopa}, which should be consulted for more information.

If $S$ is a linearly ordered set with an even number of elements, then a \emph{complete Feynman diagram $\gamma$ on $S$} is a partition of $S$ into two-element sets. We shall denote the collection of all such diagrams by $\FFF_c(S)$ and regard $\gamma$ as a set of ordered pairs
\begin{equation}\label{feynman}
\gamma=\{(i_1,j_1),\ldots,(i_n,j_n)\}
\end{equation}
if $S=\{i_1,\ldots,i_n,j_1,\ldots,j_n\}$, $i_1<\ldots<i_n$, $i_k<j_k$, $j_k\ne j_l$ for $k\ne l$ (the $j_k$ are generally unordered). We may represent a Feynman diagram with a simple graph, for example:
\begin{center}
\begin{pspicture}(0,0)(8,2)
\uput[u](.5,0){1}
\uput[u](1.5,0){2}
\uput[u](2.5,0){3}
\uput[u](3.5,0){4}
\uput[u](4.5,0){5}
\uput[u](5.5,0){6}
\uput[u](6.5,0){7}
\uput[u](7.5,0){8}

\psline(.5,.5)(1.5,1)
\psline(2.5,.5)(1.5,1)
\psline(4.5,.5)(5,0.75)
\psline(5.5,.5)(5,0.75)
\psline(3.5,.5)(5.5,1.5)
\psline(7.5,.5)(5.5,1.5)
\psline(1.5,.5)(4,1.75)
\psline(6.5,.5)(4,1.75)
\end{pspicture}
\end{center}
(this one corresponds to the Feynman diagram $\gamma=\{(1,3),(2,7),(4,8),(5,6)\}$ on $\{1,\ldots,8\}$).

A pair $(k,l)\in\gamma$ is a \emph{left crossing} for $(i,j)\in\gamma$ if $k<i<l<j$ and $c_l(i,j)$ will stand for the number of such left crossings for $(i,j)$. The number
\[
c(\gamma)=\sum_{(i,j)\in\gamma} c_l(i,j)
\]
will be referred to as the \emph{crossing number} of $\gamma$ (\emph{restricted crossing number} in Biane's \cite{biane} terminology). The crossing number can be determined by counting the intersection in the corresponding graph. For example, in the diagram above, $c_l(2,7)=c_l(4,8)=1$, $c_l(1,3)=c_l(5,6)=0$, and $c(\gamma)=2$.

For a given $2n$-element set $S$ and a Catalan sequence $\varepsilon\in C(2n)$ we call a complete Feynman diagram on $S$ of the form \eqref{feynman} compatible with $\varepsilon$ if $\varepsilon(i_k)=-1$ and $\varepsilon(j_k)=1$ for all $k$. If $A\subset C(2n)$ then we denote by $\FFF_c(S,A)$ the collection of all complete Feynman diagrams on $S$ compatible with some $\varepsilon\in A$.

\subsubsection{Some low-order moment computations}\label{sss:examples} To get a feeling of how the algorithm described in the proof of Theorem \ref{t:characterization} works under the assumptions of Problem \ref{t:qnormal}, we will compute a few low-order mixed moments of random variables $X_1,\ldots,X_d$.

The first moments $\wo[X_j]$ are assumed to be zero. Let us compute the ''covariances'':
\begin{multline*}
\wo[X_i X_j]=\iloskal{Z_1 Z_2\phi}{\phi}=\is{1,2}=\is{1^-,2}=\is{\qc{1^-}{2}}=\\
\is{\qc{1^-}{2^-}}+\is{\qc{1^-}{2^0}}+\is{\qc{1^-}{2^+}}.
\end{multline*}
The first term vanishes because $\left(\am{i}\am{j}-q\cdot\am{j}\am{i}\right)\phi=0$. From the assumptions of Problem \ref{t:qnormal} it follows that $\is{\qc{1^-}{2^0}}=0$ and $\is{\qc{1^-}{2^+}}=\is{\qc{\am{i}}{\ap{j}}}=c(i,j)\iloskal{I\phi}{\phi}$. Hence
$
\wo[X_i X_j]=c(i,j).
$

The third moments are zero. Indeed, Lemma \ref{l:qprodrule} implies that
\begin{equation*}
\iloskal{Z_1 Z_2 Z_3\phi}{\phi}=\is{1,2,3}=\is{1^-,2,3}=\is{\qc{1^-}{2,3}}=\is{\qc{1^-}{2},3}+q\cdot\is{2,\qc{1^-}{3}},
\end{equation*}
and from the assumptions of Problem \ref{t:qnormal} and formulas \eqref{moreminuses} and \eqref{a0}, it follows that both
\[
\is{\qc{1^-}{2},3}=c(\sigma(1),\sigma(2))\cdot\wo[X_{\sigma(3)}]+\is{1^-,2^-,3}-q\cdot\is{2^-,1^-,3},
\]
and
\[
\is{2,\qc{1^-}{3}}=c(\sigma(1),\sigma(3))\cdot\wo[X_{\sigma(2)}]+\is{2,1^-,3^-}-q\cdot\is{2,3^-,1^-}
\]
vanish.

The last example we present is a computation of the fourth mixed moments. We have
\begin{multline*}
\iloskal{Z_1\ldots Z_4 \phi}{\phi}=\is{1,2,3,4}=\is{1^-,2,3,4}=\is{\qc{1^-}{2,3,4}}=\\
\is{\qc{1^-}{2},3,4}+q\cdot\is{2,\qc{1^-}{3},4}+q^2\cdot\is{2,3,\qc{1^-}{4}}.
\end{multline*}
Now the key point is the following. It turns out that a smart way of handling the three elements of the sum above is to expand only the first one via \eqref{xexpans} (if one expands at each stage all three elements of relevant sums, the total number of elements under consideration grows fast and it is hard to find any pattern or regularity). Let us expand then:
\begin{multline*}
\is{\qc{1^-}{2},3,4}=\is{\qc{1^-}{2^-},3,4}+\is{\qc{1^-}{2^0},3,4}+\is{\qc{1^-}{2^+},3,4}=\\
\is{1^-,2^-,3,4}-q\cdot\is{2^-,1^-,3,4}+c(\sigma(1),\sigma(2))\cdot\is{3,4}.
\end{multline*}
The term $\is{3,4}=\is{3^-,4}$ is the expectation of a monomial of a lower degree (in fact it equals $c(\sigma(3),\sigma(4))$; we leave $\is{1^-,2^-,3,4}$ temporarily unexpanded and  focus on
\begin{multline*}
-q\cdot\is{2^-,1^-,3,4}=-q\cdot\is{2^-,\qc{1^-}{3,4}}=-q\cdot\is{2^-,\qc{1^-}{3},4}-q^2\cdot\is{2^-,3,\qc{1^-}{4}}.
\end{multline*}
Thus we arrive at
\begin{multline*}
\is{1,2,3,4}=c(\sigma(1),\sigma(2))\cdot c(\sigma(3),\sigma(4))+\is{1^-,2^-,3,4}+\\
q\cdot\left(\is{2,\qc{1^-}{3},4}-\is{2^-,\qc{1^-}{3},4}\right)+q^2\cdot\left(\is{2,3,\qc{1^-}{4}}-\is{2^-,3,\qc{1^-}{4}}\right).
\end{multline*}
By \eqref{xexpans} and \eqref{a0} applied to $Z_2$ in $\is{2,\qc{1^-}{3},4}$ and $\is{2,3,\qc{1^-}{4}}$
\begin{equation*}
\is{2,\qc{1^-}{3},4}=\is{2^-,\qc{1^-}{3},4},\ \is{2,3,\qc{1^-}{4}}=\is{2^-,3,\qc{1^-}{4}},
\end{equation*}
so we are left with a computation of $\is{1^-,2^-,3,4}$:
\begin{equation}\label{3}
\is{1^-,2^-,3,4}=\is{1^-,\qc{2^-}{3,4}}=\is{1^-,\qc{2^-}{3},4}+q\cdot\is{1^-,3,\qc{2^-}{4}}.
\end{equation}
We again expand only $Z_3$ in $\qc{2^-}{3}$ and obtain
\begin{multline*}
\is{1^-,2^-,3,4}=c(\sigma(2),\sigma(3))\cdot\is{1^-,4}+\is{1^-,2^-,3^-,4}-\\
q\cdot\is{1^-,3^-,2^-,4}+q\cdot\is{1^-,3,\qc{2^-}{4}}.
\end{multline*}
Formula \eqref{moreminuses} implies that $\is{1^-,2^-,3^-,4}=\is{1^-,3^-,2^-,4}=0$ and
\begin{equation*}
\is{1^-,3,\qc{2^-}{4}}=\is{1^-,3,\qc{2^-}{4^+}}=c(\sigma(2),\sigma(4))\cdot\is{1^-,3}.
\end{equation*}
Concluding, we arrived at
\begin{multline*}
\is{1,2,3,4}=c(\sigma(1),\sigma(2))\cdot c(\sigma(3),\sigma(4))+c(\sigma(2),\sigma(3))\cdot c(\sigma(1),\sigma(4))+\\
q\cdot c(\sigma(2),\sigma(4))\cdot c(\sigma(1),\sigma(3)).
\end{multline*}

\subsubsection{The solution}

Now, for the rest of Subsection \ref{ss:qnormal}, fix $k\in\nat$ and $\sigma(1),\ldots,\sigma(k)\in\{1,\ldots,d\}$. We will be considering $K=\{k_1,\ldots,k_{N}\}$ ($N$ is a positive integer), satisfying
\begin{equation}\label{assforK}
k_1<\ldots<k_{N}\textrm{ and }k_i\in\{1,\ldots,k\},\ i=1,\ldots,N.
\end{equation}
If $N$ is an even number (say $N=2n$ for some $n\in\nat$), then for any $K$ satisfying \eqref{assforK} and a Feynman diagram $\gamma=\{(i_1,j_1),\ldots,(i_n,j_n)\}$ on $K$, we define
\begin{equation*}
\nu(\gamma)=\prod_{m=1}^n c\left(\sigma(k_{i_m}),\sigma(k_{j_m})\right).
\end{equation*}

In order to solve Problem \ref{t:qnormal} we will compute the mixed moments of the form \eqref{moment}:
\begin{equation*}
\wo\left[X_{\sigma(1)}\ldots X_{\sigma(k)}\right]=\iloskal{X_{\sigma(1)}\ldots X_{\sigma(k)}\phi}{\phi}=
\iloskal{Z_1\ldots Z_{k}\phi}{\phi}=\is{1,\ldots,k}.
\end{equation*}
Our task will be to show that they match the ones of $q$-Gaussian variables that are given by

\begin{theorem}[$q$-Wick theorem]\label{t:q-Wick}
For any $q$-Gaussian random variables $Y_i$ with "covariances" $\wo[Y_i Y_j]=c(i,j)$, we have
\begin{equation}\label{q-Wick}
\wo[Y_{\sigma(1)}\ldots Y_{\sigma(2n)}]=\sum_{\gamma\in \FFF_c(\{1,\ldots,2n\})} q^{c(\gamma)}\cdot\nu(\gamma)
\end{equation}
where
\[
\nu(\gamma)=\nu(\{(i_1,j_1),\ldots,(i_n,j_n)\})=\prod_{m=1}^n c\left(\sigma(i_m),\sigma(j_m)\right),
\]
and $\wo[Y_{\sigma(1)}\ldots Y_{\sigma(2n+1)}]=0$.
\end{theorem}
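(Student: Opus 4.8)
The plan is to establish \eqref{q-Wick} by a direct computation in the $q$-Fock space $\Gamma_q(H)$ — essentially the classical Bo\.{z}ejko--Speicher \cite{bosp} / Effros--Popa \cite{effrospopa} calculation. Write each $Y_{\sigma(m)}=\ap{f_{\sigma(m)}}+\am{f_{\sigma(m)}}$ and expand $Y_{\sigma(1)}\cdots Y_{\sigma(M)}$ into $2^{M}$ summands indexed by $\varepsilon\in\{1,-1\}^{M}$, so that
\[
\wo[Y_{\sigma(1)}\cdots Y_{\sigma(M)}]=\sum_{\varepsilon\in\{1,-1\}^{M}}\iloskal{a^{\varepsilon(1)}(f_{\sigma(1)})\cdots a^{\varepsilon(M)}(f_{\sigma(M)})\phi}{\phi}
\]
(with the convention $a^{1}=a^{+}$, $a^{-1}=a^{-}$). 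As recalled right after the definition of Catalan sequences, a summand vanishes unless $\varepsilon\in C(M)$; since $\varepsilon\in C(M)$ forces $\sum_{j}\varepsilon(j)=0$, no such $\varepsilon$ exists when $M=2n+1$, which already gives $\wo[Y_{\sigma(1)}\cdots Y_{\sigma(2n+1)}]=0$.

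For $M=2n$ and a fixed $\varepsilon\in C(2n)$ I would evaluate the vacuum expectation by letting the operators act on $\phi$ from the right, i.e.\ processing positions in the order $2n,2n-1,\ldots,1$ and maintaining a ``stack'' of the creators placed but not yet cancelled, most recent on top. A creator at position $j$ pushes $f_{\sigma(j)}$ onto the stack; an annihilator at position $i$ acting on the current stack $f_{j_{1}}\otimes\cdots\otimes f_{j_{r}}$ removes one factor $f_{j_{l}}$ and contributes the scalar $q^{l-1}\iloskal{f_{\sigma(i)}}{f_{j_{l}}}=q^{l-1}c(\sigma(i),\sigma(j_{l}))$, by the defining formula for $a^{-}$. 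Since $\varepsilon$ is a Catalan sequence the stack is nonempty whenever an annihilator acts, and recording for each annihilator which creator it cancels produces exactly a complete Feynman diagram $\gamma=\{(i_{1},j_{1}),\ldots,(i_{n},j_{n})\}$ with $i_{k}<j_{k}$ and $\varepsilon(i_{k})=-1$, $\varepsilon(j_{k})=1$, i.e.\ one compatible with $\varepsilon$. Conversely every such diagram is realized exactly once, and every complete Feynman diagram on $\{1,\ldots,2n\}$ is compatible with precisely one Catalan sequence, so summing over $\varepsilon\in C(2n)$ and the compatible diagrams amounts to summing over all of $\FFF_{c}(\{1,\ldots,2n\})$. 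The product of the contraction scalars over the $n$ pairs of $\gamma$ is $\prod_{m=1}^{n}c(\sigma(i_{m}),\sigma(j_{m}))=\nu(\gamma)$.

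The step requiring genuine care — and the one I expect to be the main obstacle — is the bookkeeping of the powers of $q$: one must show that the total exponent contributed by a diagram $\gamma$ is its crossing number $c(\gamma)$. When the annihilator at position $i$ cancels the creator at position $j$ (so $(i,j)\in\gamma$), the exponent $l-1$ is the number of creators lying above $f_{\sigma(j)}$ on the stack at that moment; since positions are processed in decreasing order, such a creator sits at some position $p$ with $i<p<j$, and it is still on the stack precisely when its partner annihilator has not yet been processed, i.e.\ lies at a position $<i$. Hence these creators are in bijection with the pairs $(p',p)\in\gamma$ with $p'<i<p<j$ — exactly the left crossings of $(i,j)$ — so $l-1=c_{l}(i,j)$ and this annihilator contributes $q^{c_{l}(i,j)}$. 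Multiplying over the $n$ pairs gives the factor $q^{\sum_{(i,j)\in\gamma}c_{l}(i,j)}=q^{c(\gamma)}$. Collecting everything yields $\wo[Y_{\sigma(1)}\cdots Y_{\sigma(2n)}]=\sum_{\gamma\in\FFF_{c}(\{1,\ldots,2n\})}q^{c(\gamma)}\,\nu(\gamma)$, which is \eqref{q-Wick}; once the identity $l-1=c_{l}(i,j)$ is in hand, the remaining verifications are routine.
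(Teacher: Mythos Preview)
Your argument is correct: the expansion over $\varepsilon\in\{-1,1\}^{M}$, the reduction to Catalan sequences, the right-to-left ``stack'' reading of the $q$-Fock action, the bijection between choices of which creator each annihilator removes and complete Feynman diagrams compatible with $\varepsilon$, and the crossing count $l-1=c_l(i,j)$ are all sound. This is precisely the Bo\.{z}ejko--Speicher/Effros--Popa computation the paper cites.

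Note, however, that the paper does \emph{not} supply its own proof of Theorem~\ref{t:q-Wick}; it is quoted as a known result with the reference to \cite{bosp} (and \cite{effrospopa}) immediately after the statement. What the paper does instead is the reverse: it takes the hypotheses of Problem~\ref{t:qnormal} (which $q$-Gaussians satisfy, since $\qc{a^-(f)}{a^+(g)}=\iloskal{f}{g}I$, $a^0=0$, and $\wo[x(f)]=0$) and, via the $q$-commutator algorithm of Theorem~\ref{t:characterization} together with the combinatorial Lemma~\ref{l:mechanizmy} and Proposition~\ref{p:maincombi}, derives the right-hand side of \eqref{q-Wick} \emph{without} ever working explicitly in the $q$-Fock space. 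So your direct Fock-space computation and the paper's abstract $q$-commutator recursion are two genuinely different routes to the same identity: yours is shorter and model-specific, while the paper's route shows that the formula is forced by the commutator data alone and hence applies to any realization satisfying the hypotheses of Problem~\ref{t:qnormal}.
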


Theorem \ref{t:q-Wick} was found by Bo\.{z}ejko and Speicher \cite{bosp} (see also \cite{effrospopa} for an alternative treatment). It generalizes the famous Wick formula (see Subsection \ref{s:concluding}) for computing higher-order moments of the multivariate normal distribution in terms of its covariance matrix, which had apparently been discovered first by Isserlis \cite{isserlis} and rediscovered independently by Wick \cite{wick}.

We begin with a lemma that captures the cancellation mechanism that has already appeared in the computation of the fourth moments in Subsection \ref{sss:examples}:

\begin{lemma}\label{l:mechanizmy} Fix $N\in\nat$ and $K=\{k_1,\ldots,k_{N}\}$ satisfying \eqref{assforK} and define for $1\le l<m\le N$
\[
R(l;m;U):=\is{k_1^-,\ldots,k_l^-,k_m,U}-\is{k_1^-,\ldots,k_l^-,k_m^-,U},
\]
$U$ is any operator.

Under the assumptions of Problem \ref{t:qnormal}, the following statements hold:
\begin{equation}\label{roznice}
R(l;m;U)=\sum_{j=1}^{l} q^{l-j}\cdot c\left(\sigma(k_j),\sigma(k_m)\right)\cdot \is{k_1^-,\ldots,\widehat{k_j^-},\ldots,k_l^-,U}
\end{equation}
(here and later the hat sign stands for the deletion of the relevant operator from the product),
\begin{multline}\label{mechogolny}
\is{k_1^-,\ldots,k_m^-,k_{m+1},\ldots,k_{N}}=\\
\sum_{j=1}^m q^{m-j}\cdot c(\sigma(k_j),\sigma(k_{m+1}))\cdot\is{k_1^-,\ldots,\widehat{k_j^-},\ldots,k_m^-,k_{m+2},\ldots,k_{N}}+\\
\is{k_1^-,\ldots,k_{m+1}^-,k_{m+2},\ldots,k_{N}}.
\end{multline}
\end{lemma}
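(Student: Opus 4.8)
## Proof Proposal for Lemma \ref{l:mechanizmy}

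The plan is to prove \eqref{roznice} first, since it isolates the core cancellation, and then derive \eqref{mechogolny} by applying \eqref{roznice} inductively together with the $q$-product rule from Lemma \ref{l:qprodrule}. For \eqref{roznice}, I would fix $l<m$ and proceed by induction on $l$. The base case $l=1$ amounts to the identity $R(1;m;U)=\is{k_1^-,k_m,U}-\is{k_1^-,k_m^-,U}=\is{k_1^-,\qc{}{}\ldots}$; more precisely, using $k_m = k_m^-+k_m^0+k_m^+$ (formula \eqref{xexpans}), the difference $\is{k_1^-,k_m,U}-\is{k_1^-,k_m^-,U}$ equals $\is{k_1^-,k_m^0,U}+\is{k_1^-,k_m^+,U}$, and then I would rewrite $k_1^- \, k_m^{0/+}$ via the $q$-commutator: $k_1^- k_m^{0} = \qc{k_1^-}{k_m^0}+q\, k_m^0 k_1^-$ and similarly for $k_m^+$. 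Under the assumptions of Problem \ref{t:qnormal}, $\qc{k_1^-}{k_m^0}=0$ and $\qc{k_1^-}{k_m^+}=c(\sigma(k_1),\sigma(k_m))\cdot I$, while the leftover terms $q\,\is{k_m^0 k_1^- U}$ and $q\,\is{k_m^+ k_1^- U}$ — wait, this needs care: $k_1^-$ is not at the right end. Instead, the cleaner route is to push $k_1^-$ to the right past $k_m$ using the $q$-commutator, obtaining $\is{k_1^-,k_m,U}=q\is{k_m,k_1^-,U}+\is{\qc{k_1^-}{k_m},U}$, expand $\qc{k_1^-}{k_m}$ via \eqref{xexpans} keeping only the $\varepsilon=+$ term (the $0$ term vanishes, and for the $-$ term one uses that $\is{k_m^-,k_1^-,U}$-type expressions eventually vanish by \eqref{moreminuses}/\eqref{a0} once all trailing operators are annihilators), and similarly handle $\is{k_1^-,k_m^-,U}$. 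This produces the $j=l=1$ term $c(\sigma(k_1),\sigma(k_m))\is{U}$.

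For the inductive step of \eqref{roznice}, I would take the leftmost annihilator $k_1^-$ in both terms of $R(l;m;U)$ and commute it to the right through $k_2^-,\ldots,k_l^-$ — these commutators $\qc{k_1^-}{k_j^-}$ are \emph{not} assumed to vanish, but the point (exactly as in the fourth-moment computation in Subsection \ref{sss:examples}) is that moving $k_1^-$ past another annihilator only rearranges annihilators among themselves, and any term in which the product ends with a string of annihilators acting on $\phi$ dies by \eqref{a0}. So effectively $k_1^-$ can be carried to the position just before $k_m$ (resp. $k_m^-$) at the cost of a factor $q^{l-1}$ from the $q$-commutations, plus harmless vanishing remainders. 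At that point I apply the $l=1$ analysis to the pair $(k_1^-, k_m)$ to extract $q^{l-1}c(\sigma(k_1),\sigma(k_m))\is{k_2^-,\ldots,k_l^-,U}$, which is the $j=1$ summand, and the remaining difference is $R(l-1;m;U)$ applied with the index set $\{k_2,\ldots,k_l\}$, whose contribution by the induction hypothesis is $\sum_{j=2}^l q^{l-j}c(\sigma(k_j),\sigma(k_m))\is{k_1^-,\ldots,\widehat{k_j^-},\ldots,k_l^-,U}$. Combining gives \eqref{roznice}.

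For \eqref{mechogolny}, I would start from $\is{k_1^-,\ldots,k_m^-,k_{m+1},\ldots,k_N}$ and simply use \eqref{xexpans} on $Z_{k_{m+1}}=k_{m+1}^-+k_{m+1}^0+k_{m+1}^+$. The term with $k_{m+1}^-$ is precisely $\is{k_1^-,\ldots,k_{m+1}^-,k_{m+2},\ldots,k_N}$, the last line of \eqref{mechogolny}. For the $k_{m+1}^0$ and $k_{m+1}^+$ terms, I need to move $k_{m+1}^{0/+}$ — it sits immediately to the right of the block $k_1^-,\ldots,k_m^-$ — leftward past each $k_j^-$; but rather than that, observe that the difference $\is{k_1^-,\ldots,k_m^-,k_{m+1}^0+k_{m+1}^+,\,k_{m+2},\ldots}$ is exactly $\is{k_1^-,\ldots,k_m^-,k_{m+1},k_{m+2},\ldots}-\is{k_1^-,\ldots,k_m^-,k_{m+1}^-,k_{m+2},\ldots}$, which, after appending the tail $U:=Z_{k_{m+2}}\cdots Z_{k_N}$ (viewed as a single operator) is $R(m;m+1;U)$ with $N$ replaced by $m$ in the annihilator block — and \eqref{roznice} evaluates it to $\sum_{j=1}^m q^{m-j}c(\sigma(k_j),\sigma(k_{m+1}))\is{k_1^-,\ldots,\widehat{k_j^-},\ldots,k_m^-,U}$, which is exactly the first sum in \eqref{mechogolny}. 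Thus \eqref{mechogolny} is really just a restatement of \eqref{roznice} with a specific choice of $U$, plus the bookkeeping of the $k_{m+1}^-$ term.

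The main obstacle is the rigorous justification that commuting an annihilator $k_1^-$ to the right through a block of other annihilators introduces only a power of $q$ and \emph{genuinely} vanishing remainder terms — i.e. controlling the recursion in which $\qc{k_1^-}{k_j^-}$ is re-expanded via \eqref{xexpans} and the resulting $\is{\cdots,k_j^-,k_1^-,\cdots}$ pieces are shown to terminate in trailing annihilators killed by \eqref{a0}. This is the noncommutative/$q$-analogue of the termination argument already carried out in the proof of Theorem \ref{t:characterization} around formula \eqref{6}, so I would phrase it once as an auxiliary sub-claim ("any $\is{\ldots}$ whose reduction produces a product ending in one or more bare annihilators contributes $0$") and invoke it repeatedly, keeping careful track that the number of $Z$-operators trailing the rightmost annihilator strictly decreases at each step, exactly as in that earlier proof.
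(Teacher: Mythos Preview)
Your overall plan---prove \eqref{roznice} by induction on $l$ and then read off \eqref{mechogolny} as the special case $U=Z_{k_{m+2}}\cdots Z_{k_N}$---is the paper's plan, and your handling of the base case $l=1$ and of \eqref{mechogolny} is correct.

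The gap is in the inductive step for \eqref{roznice}. You propose to carry the \emph{leftmost} annihilator $k_1^-$ to the right through $k_2^-,\ldots,k_l^-$, asserting that this costs only a factor $q^{l-1}$ plus ``harmless vanishing remainders''. But those remainders are not harmless. Writing $k_1^-k_2^-=\qc{k_1^-}{k_2^-}+q\,k_2^-k_1^-$ and taking the difference that defines $R$ leaves
\[
\is{\qc{k_1^-}{k_2^-},k_3^-,\ldots,k_l^-,k_m,U}-\is{\qc{k_1^-}{k_2^-},k_3^-,\ldots,k_l^-,k_m^-,U},
\]
and this is \emph{not} zero: the product does not end in annihilators acting on $\phi$, it ends in $k_m,U$ (resp.\ $k_m^-,U$), so \eqref{a0} is irrelevant. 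The termination argument from the proof of Theorem~\ref{t:characterization} you invoke does not help either: that argument tracks the number of $Z$-operators to the right of the rightmost annihilator and shows it strictly decreases, but in the expression above that count is exactly the same as in the original $R(l;m;U)$. In short, $\qc{k_1^-}{k_j^-}$ is not zero, the terms it generates contribute nontrivially to $R(l;m;U)$, and nothing in your scheme shows they cancel.

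The paper avoids this entirely by peeling off the \emph{rightmost} annihilator $k_l^-$ instead of the leftmost. Applying the $n=2$ case of Lemma~\ref{l:qprodrule} to $k_l^-$ acting on $k_m\cdot U$ (and on $k_m^-\cdot U$) and subtracting gives the clean recursion
\[
R(l;m;U)=c(\sigma(k_l),\sigma(k_m))\cdot\is{k_1^-,\ldots,k_{l-1}^-,U}+q\cdot R\bigl(l-1;m;k_l^-,U\bigr),
\]
because $\qc{k_l^-}{k_m}-\qc{k_l^-}{k_m^-}=\qc{k_l^-}{k_m^0}+\qc{k_l^-}{k_m^+}=c(\sigma(k_l),\sigma(k_m))\,I$ under the assumptions of Problem~\ref{t:qnormal}, and the leftover difference is $R(l-1;m;\qc{k_l^-}{U})=R(l-1;m;k_l^-,U)$ by \eqref{a0}. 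Iterating this recursion down to your base case yields \eqref{roznice} directly, and no $\qc{a^-}{a^-}$ terms ever appear.
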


\begin{proof}
We start with a justification of the first statement. By Lemma \ref{l:qprodrule}
\begin{multline*}
R(l;m;U)=\is{k_1^-,\ldots,k_l^-,k_m,U}-\is{k_1^-,\ldots,k_l^-,k_m^-,U}=\\
\is{k_1^-,\ldots,k_{l-1}^-,\qc{k_l^-}{k_m},U}-\is{k_1^-,\ldots,k_{l-1}^-,\qc{k_l^-}{k_m^-},U}\\
+q\cdot\left\{
\is{k_1^-,\ldots,k_{l-1}^-,k_m,\qc{k_l^-}{U}}-\is{k_1^-,\ldots,k_{l-1}^-,k_m^-,\qc{k_l^-}{U}}\right\}
\end{multline*}
From \eqref{xexpans} it follows that the first difference equals
$$
c(\sigma(k_l),\sigma(k_m))\cdot\is{k_1^-,\ldots,k_{l-1}^-,U};
$$
the difference in the curly bracket is $R(l-1;m;\qc{k_l^-}{U})$, which in fact equals $R(l-1;m;k_l^-,U)$. Hence
\begin{equation}\label{1}
R(l;m;U)=c(\sigma(k_l),\sigma(k_m))\cdot\is{k_1^-,\ldots,k_{l-1}^-,U}+q\cdot R(l-1;m;k_l^-,U).
\end{equation}
On the other hand
\begin{multline*}
R(1;m;U)=\is{k_1^-,k_m,U}-\is{k_1^-,k_m^-,U}=c(\sigma(k_1),\sigma(k_m))\cdot\is{U}+\\q\cdot\left\{\is{k_m,\qc{k_1^-}{U}}-\is{k_m^-,\qc{k_1^-}{U}}\right\}.
\end{multline*}
By \eqref{xexpans}, \eqref{sprzez} and \eqref{a0}, the difference in the curly brackets vanishes, so $R(1;m;U)=c(\sigma(k_1),\sigma(k_m)\cdot\is{U}$. Combining \eqref{1} with this, we get
\begin{equation*}
R(l;m;U)=\sum_{j=0}^{l-1} q^j\cdot c(\sigma(k_{l-j}),\sigma(k_m))\cdot \is{k_1^-,\ldots,\widehat{{k_{l-j}^-}},\ldots,k_l^-,U}.
\end{equation*}
A simple reordering gives \eqref{roznice}.

Formula \eqref{mechogolny} readily follows from the definition of $R(m;m+1;k_{n+2},\ldots,k_N)$ and \eqref{roznice}.

%
\end{proof}

\begin{corollary}\label{c:nieparzyste}
If $n\in\nat$ and $K=\{k_1,\ldots,k_{2n-1}\}$ satisfies \eqref{assforK}, then for all $m\in\{1,\ldots,2n-1\}$
\[
\is{k_1^-,\ldots,k_m^-,k_{m+1},\ldots,k_{2n-1}}=0.
\]
\end{corollary}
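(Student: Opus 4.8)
The plan is to prove the statement by induction on $n$, and inside each inductive step by a downward induction on $m$ (equivalently, induction on $2n-1-m$). I would actually prove the slightly stronger assertion in which $m$ is allowed to take the value $0$ as well, with the convention that $m=0$ stands for $\is{k_1,\ldots,k_{2n-1}}$; this is harmless, since $\wo[X_i]=0$ forces $\is{k_1,k_2,\ldots,k_{2n-1}}=\is{k_1^-,k_2,\ldots,k_{2n-1}}$ exactly as in the first display of the proof of Theorem \ref{t:characterization}, so the $m=0$ case is subsumed by the $m=1$ case. The base case $n=1$ is immediate: the only bracket is $\is{k_1^-}=\iloskal{\amx{\sigma(k_1)}\phi}{\phi}$, which vanishes by \eqref{a0}.

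For the inductive step I would assume the (strengthened) claim with $n-1$ in place of $n$ and fix $K=\{k_1<\cdots<k_{2n-1}\}$ satisfying \eqref{assforK}. The inner induction starts at $m=2n-1$, where all the operators are annihilators and $\is{k_1^-,\ldots,k_{2n-1}^-}=0$ follows at once from \eqref{moreminuses}, since $2n-1>\lfloor(2n-1)/2\rfloor$. For $1\le m\le 2n-2$ I would apply \eqref{mechogolny} with $N=2n-1$: it expresses $\is{k_1^-,\ldots,k_m^-,k_{m+1},\ldots,k_{2n-1}}$ as a linear combination of the brackets $\is{k_1^-,\ldots,\widehat{k_j^-},\ldots,k_m^-,k_{m+2},\ldots,k_{2n-1}}$ ($j=1,\ldots,m$), plus the single bracket $\is{k_1^-,\ldots,k_{m+1}^-,k_{m+2},\ldots,k_{2n-1}}$. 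The last term vanishes by the inner induction hypothesis (it has $m+1$ leading annihilators among $2n-1$ operators), while each bracket in the sum is indexed by $K\setminus\{k_j,k_{m+1}\}$, a set of $2n-3=2(n-1)-1$ integers still satisfying \eqref{assforK}, and carries exactly $m-1$ leading annihilators; hence every such bracket vanishes by the outer induction hypothesis. Therefore the whole expression is $0$, and both inductions close.

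The one place that requires care --- and the only real obstacle, though a purely bookkeeping one --- is verifying in the last step that the brackets manufactured by \eqref{mechogolny} genuinely fall under the inductive statement: that their index sets are again of the form \eqref{assforK}, that they involve exactly $2(n-1)-1$ operators, and that their number $m-1$ of leading annihilators lies in the admissible range $\{0,\ldots,2(n-1)-1\}$. The lower end $m-1=0$ (arising when $m=1$) is precisely why it is cleanest to allow $m=0$ in the statement one proves. Once this is checked, the argument is driven entirely by Lemma \ref{l:mechanizmy} together with the elementary relations \eqref{a0} and \eqref{moreminuses}, with no further difficulty.
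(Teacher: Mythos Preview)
Your proof is correct and follows essentially the same route as the paper. The paper's own argument is the one-line remark that the claim ``immediately follows from \eqref{mechogolny} and the assumption $\wo[X_j]=0$, combined with \eqref{moreminuses}''; what you have done is unpack precisely this sentence into the double induction (outer on $n$, inner downward on $m$) that it implicitly encodes, and you correctly identify the role of $\wo[X_j]=0$ in handling the $m=1$ boundary where the summands produced by \eqref{mechogolny} carry no leading annihilator.
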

\begin{proof}
It immediately follows from \eqref{mechogolny} and the assumption $\wo[X_j]=0$, $j=1,\ldots,d$, combined with \eqref{moreminuses}.
\end{proof}

\begin{corollary}
For any $m,n\in\nat$, $m\le n$ and $K=\{k_1,\ldots,k_{2n}\}$ satisfying \eqref{assforK},
\begin{multline}\label{mechogolnykoncowy}
\is{k_1^-,\ldots,k_m^-,k_{m+1},\ldots,k_{2n}}=\\
\sum_{l=m+1}^{n+1}\sum_{j=1}^{l-1} q^{l-1-j}\cdot c(\sigma(k_j),\sigma(k_l))\cdot
\is{k_1^-,\ldots,\widehat{k_j^-},\ldots,k_{l-1}^-,k_{l+1},\ldots,k_{2n}}.
\end{multline}
\end{corollary}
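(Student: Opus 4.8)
The plan is to prove \eqref{mechogolnykoncowy} by induction on $m$, using \eqref{mechogolny} as the one-step engine. The base of the induction is actually the case where the left-hand side has the maximal possible number of annihilators compatible with nonvanishing; but it is more convenient to run the induction \emph{downward} on the index difference, i.e. to iterate \eqref{mechogolny} starting from the given $m$ and to show that each application either produces a genuine lower-degree term (already in the desired form) or pushes the ``annihilator front'' one step to the right.

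Concretely, first I would apply \eqref{mechogolny} to $\is{k_1^-,\ldots,k_m^-,k_{m+1},\ldots,k_{2n}}$ verbatim: this yields the $l=m+1$ block of the right-hand side of \eqref{mechogolnykoncowy}, namely $\sum_{j=1}^m q^{m-j} c(\sigma(k_j),\sigma(k_{m+1}))\cdot\is{k_1^-,\ldots,\widehat{k_j^-},\ldots,k_m^-,k_{m+2},\ldots,k_{2n}}$, plus the single leftover term $\is{k_1^-,\ldots,k_{m+1}^-,k_{m+2},\ldots,k_{2n}}$. The terms in the sum are exactly the $l=m+1$ summands in \eqref{mechogolnykoncowy} (with $l-1=m$), so they are ``done''. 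The leftover term has one more annihilator, so I apply \eqref{mechogolny} again to it, obtaining the $l=m+2$ block plus a new leftover $\is{k_1^-,\ldots,k_{m+2}^-,k_{m+3},\ldots,k_{2n}}$. Repeating, after $r$ steps the accumulated ``done'' terms are precisely $\sum_{l=m+1}^{m+r}\sum_{j=1}^{l-1} q^{l-1-j} c(\sigma(k_j),\sigma(k_l))\cdot\is{k_1^-,\ldots,\widehat{k_j^-},\ldots,k_{l-1}^-,k_{l+1},\ldots,k_{2n}}$, and the single leftover is $\is{k_1^-,\ldots,k_{m+r}^-,k_{m+r+1},\ldots,k_{2n}}$.

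The only remaining point is why the recursion stops exactly at $l=n+1$, i.e. why the leftover term vanishes once the annihilator front reaches position $n+1$. That leftover is $\is{k_1^-,\ldots,k_{n+1}^-,k_{n+2},\ldots,k_{2n}}$, a product of $2n$ operators of which the first $n+1$ are annihilators. By \eqref{moreminuses} (with the roles played there: $n+1 > \lfloor 2n/2\rfloor = n$), this expectation is zero. Hence after $r = n+1-m$ steps the leftover drops out and the sum of the ``done'' terms is exactly the right-hand side of \eqref{mechogolnykoncowy}, which completes the proof.

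The main obstacle is bookkeeping rather than conceptual: one must check that the exponents of $q$ and the index ranges line up correctly when \eqref{mechogolny} is reindexed at each stage — in particular that the $m$-th application of \eqref{mechogolny} contributes the block with outer index $l=m+1$ (so that the hat appears at position $j\le l-1$ and the surviving annihilators run up to $k_{l-1}^-$), and that the power $q^{m-j}$ in \eqref{mechogolny} becomes $q^{(l-1)-j}$ after the substitution $m\mapsto l-1$. It is worth writing out the first two iterations explicitly (as was done for the fourth moment in Subsection \ref{sss:examples}) to make the pattern transparent before asserting the general telescoping.
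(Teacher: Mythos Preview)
Your proposal is correct and follows essentially the same route as the paper: iterate \eqref{mechogolny} to push the annihilator front rightward, collecting the $l$-th block at each step, and terminate via \eqref{moreminuses} once the leftover carries $n+1$ annihilators. The paper's only cosmetic difference is that it phrases the iteration as stopping at the $n$-annihilator leftover and then handles the final application of \eqref{mechogolny} (the $l=n+1$ block) separately, invoking \eqref{moreminuses} at that point rather than one step later.
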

\begin{proof}
If $m<n$ then iterating \eqref{mechogolny} one gets
\begin{multline*}
\is{k_1^-,\ldots,k_m^-,k_{m+1},\ldots,k_{2n}}=\\
\sum_{l=m+1}^{n}\sum_{j=1}^{l-1} q^{l-1-j}\cdot c(\sigma(k_j),\sigma(k_l))\cdot
\is{k_1^-,\ldots,\widehat{k_j^-},\ldots,k_{l-1}^-,k_{l+1},\ldots,k_{2n}}+\\
\is{k_1^-,\ldots,k_n^-,k_{n+1},\ldots,k_{2n}}.
\end{multline*}
On the other hand, from \eqref{mechogolny} and \eqref{moreminuses} it follows that
\begin{multline}\label{mechkoncowy}
\is{k_1^-,\ldots,k_n^-,k_{n+1},\ldots,k_{2n}}=\\
\sum_{j=1}^n q^{n-j}\cdot c(\sigma(k_j),\sigma(k_{n+1}))\cdot\is{k_1^-,\ldots,\widehat{k_j^-},\ldots,k_n^-,k_{n+2},\ldots,k_{2n}}.
\end{multline}
\end{proof}

As it was seen in the examples of low-order moment computations from subsection \ref{sss:examples}, the algorithm from Theorem \ref{t:characterization} applied to the expressions of the form
\begin{equation*}
\is{k_1^-,\ldots,k_m^-,{k_{m+1}},\ldots,k_{2n}}
\end{equation*}
expands them, i.e. it transforms the operators $Z_{k_{m+1}},\ldots,Z_{k_{2n}}$ either into the relevant annihilators, or creators (the expressions in which any of $Z$'s is transformed into the preservation operator, vanish due to the assumption $\qc{\am{\cdot}}{\az{\cdot}}=0$). Proposition \ref{p:maincombi} stated below links the expressions
with the Feynman diagrams: the annihilators in the expanded versions are represented by the first elements of the ordered pairs from \eqref{feynman} (the ascending lines in the corresponding graph), while the creators are represented by the second elements (the descending lines in the graph).

\begin{proposition}\label{p:maincombi}
For any $m,n\in\nat$, $m\le n$ and $K=\{k_1,\ldots,k_{2n}\}$ satisfying \eqref{assforK},
\begin{equation}\label{nadiagramach}
\is{k_1^-,\ldots,k_m^-,{k_{m+1}},\ldots,k_{2n}}=\sum_{\gamma\in \FFF_c(K,A_{m,2n})} q^{c(\gamma)}\cdot\nu(\gamma),
\end{equation}
where
$
A_{i,j}=\{\varepsilon=(\varepsilon(1),\ldots,\varepsilon(j))\in C(j):\varepsilon(1)=\ldots=\varepsilon(i)=-1\}.
$
\end{proposition}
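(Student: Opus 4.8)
The plan is to prove \eqref{nadiagramach} by induction on the pair $(n, 2n-m)$, ordered lexicographically, i.e.\ a ``double induction'' on $n$ and on the number $2n-m$ of non-annihilator operators in the product $\is{k_1^-,\ldots,k_m^-,k_{m+1},\ldots,k_{2n}}$. The base of the induction is $m = 2n$ (no non-annihilators left), where the left-hand side is $\is{k_1^-,\ldots,k_{2n}^-}$, which vanishes by \eqref{moreminuses} (more than half the operators are annihilators applied to $\phi$); correspondingly $A_{2n,2n} = \emptyset$ since $\varepsilon(2n) = -1$ forces $\tau_{2n} = -1 < 0$, so no Catalan sequence has all entries $-1$, and the right-hand side is an empty sum. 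One should also dispose of the odd case separately — though the Proposition is stated for even cardinality, the reduction will produce sub-products of odd length, which vanish by Corollary~\ref{c:nieparzyste}; this is exactly matched on the diagram side since a linearly ordered set of odd cardinality admits no complete Feynman diagram.

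Next I would run the inductive step using formula \eqref{mechogolnykoncowy} (equivalently, iterating \eqref{mechogolny}), which expresses $\is{k_1^-,\ldots,k_m^-,k_{m+1},\ldots,k_{2n}}$ as
\[
\sum_{l=m+1}^{n+1}\sum_{j=1}^{l-1} q^{l-1-j}\cdot c(\sigma(k_j),\sigma(k_l))\cdot
\is{k_1^-,\ldots,\widehat{k_j^-},\ldots,k_{l-1}^-,k_{l+1},\ldots,k_{2n}}.
\]
Each summand on the right is, after reindexing the remaining $2n-2$ elements of $K \setminus \{k_j, k_l\}$ in increasing order, an expression of exactly the same shape with $n$ replaced by $n-1$ and with $l-2$ leading annihilators; so the inductive hypothesis applies to it. The combinatorial heart of the argument is then to check that pairing off $k_j$ (an annihilator, hence a ``left endpoint'', i.e.\ a first coordinate $i_1 < \cdots < i_n$) with $k_l$ (the operator being turned into a creator, a ``right endpoint''), and then prepending this pair to a diagram $\gamma'$ on $K \setminus \{k_j, k_l\}$ compatible with some $\varepsilon' \in A_{l-2, 2n-2}$, yields precisely the diagrams $\gamma \in \FFF_c(K, A_{m,2n})$, each exactly once; and that under this bijection the exponent of $q$ transforms correctly, namely $c(\gamma) = (l-1-j) + c(\gamma')$, while $\nu(\gamma) = c(\sigma(k_j),\sigma(k_l))\cdot\nu(\gamma')$.

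The $\nu$ bookkeeping is immediate from the definition of $\nu$. For the crossing number, the key observation is: the new pair $(k_j, k_l)$ has $k_j$ somewhere among the first $l-1$ positions and $k_l$ at position $l$, so no other pair of the diagram can form a left crossing \emph{for} $(k_j,k_l)$ (a left crossing $(a,b)$ of $(k_j,k_l)$ would need $a < k_j < b < k_l$, but all indices strictly between $k_j$ and $k_l$, i.e.\ among $k_{j+1},\dots,k_{l-1}$, are leading annihilators and hence left endpoints of their own pairs whose partners lie to the right of $k_l$, so $b > k_l$, contradiction after a moment's thought about which pairs can ``reach into'' the interval $(k_j,k_l)$). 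Conversely $(k_j,k_l)$ is itself a left crossing for exactly those pairs $(i',j')$ of $\gamma'$ with $k_j < i' < k_l < j'$ — and since $k_{j+1},\dots,k_{l-1}$ are precisely the leading annihilators of $\gamma'$ that come after position $j$, the number of such pairs is exactly $l-1-j$. This accounts for the extra factor $q^{l-1-j}$, and all other left crossings are in canonical bijection with those of $\gamma'$. The main obstacle I anticipate is this last step: carefully verifying that the map $((j,l),\gamma') \mapsto \gamma$ is a bijection onto $\FFF_c(K, A_{m,2n})$ with the stated weight, in particular that the Catalan-compatibility condition $A_{l-2,2n-2} \leftrightarrow A_{m,2n}$ matches up across the reduction (one must check $\varepsilon \in C(2n)$ with $\varepsilon(1) = \cdots = \varepsilon(m) = -1$ corresponds, after removing the entries at positions $j$ and $l$, to $\varepsilon' \in C(2n-2)$ with $l-2$ leading $-1$'s, using $j \le l-1$ and $m+1 \le l \le n+1$) and that no double-counting occurs because the pair covering the smallest uncovered index is forced. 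Everything else is the routine reindexing already rehearsed in the fourth-moment computation of Subsection~\ref{sss:examples}.
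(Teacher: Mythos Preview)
Your proposal is correct and follows essentially the same route as the paper: apply \eqref{mechogolnykoncowy}, invoke the inductive hypothesis on the terms $\is{k_1^-,\ldots,\widehat{k_j^-},\ldots,k_{l-1}^-,k_{l+1},\ldots,k_{2n}}$ (which live on a $2(n-1)$-element set with $l-2$ leading annihilators), and then verify the bijection $((j,l),\gamma')\mapsto\gamma$ together with the weight identities $\nu(\gamma)=c(\sigma(k_j),\sigma(k_l))\,\nu(\gamma')$ and $c(\gamma)=c(\gamma')+(l-1-j)$, the latter because every one of $k_{j+1},\dots,k_{l-1}$ is a left endpoint whose partner lies to the right of $k_l$.

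Two small remarks. First, the double induction on $(n,2n-m)$ is harmless but unnecessary: since \eqref{mechogolnykoncowy} already drops $n$ to $n-1$, plain induction on $n$ suffices, and this is what the paper does, with base case $n=1$. Your stated base case $m=2n$ lies outside the proposition's hypothesis $m\le n$; it is not needed. Second, in the bijection step the uniqueness of $(j,l)$ given $\gamma\in\FFF_c(K,A_{m,2n})$ comes from taking $l$ to be the position of the \emph{first creator} (the least index with $\varepsilon(l)=+1$) and $j$ its partner, rather than ``the pair covering the smallest uncovered index''; this is what pins down $l\in\{m+1,\dots,n+1\}$ and makes the correspondence with the double sum in \eqref{mechogolnykoncowy} exact.
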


\begin{proof}
We proceed by induction on $n$. Clearly, \eqref{nadiagramach} holds for $n=1$. Assume then it holds for some $n-1\in\nat$ and all $m\le n-1$.

Now, let $m\in\{1,\ldots,n\}$. By the induction assumption applied to \eqref{mechogolnykoncowy},
\begin{multline*}
\is{k_1^-,\ldots,k_m^-,k_{m+1},\ldots,k_{2n}}=\\
\sum_{l=m+1}^{n+1}\sum_{j=1}^{l-1} q^{l-1-j}\cdot c(\sigma(k_j),\sigma(k_l))\cdot
\sum_{\gamma\in\FFF_c(K\setminus\{k_j,k_l\},A_{l-2,2n-2})} q^{c(\gamma)}\cdot\nu(\gamma).
\end{multline*}

For $l\in\{m+1,\ldots,n+1\}$ and $j\in\{1,\ldots,l-1\}$, consider any Feynman diagram $\gamma$ from $\FFF_c(K\setminus\{k_j,k_l\},A_{l-1,2n-2})$. If one extends $\gamma$ to the unique Feynman diagram $\gamma^\prime\in\FFF_c(K,A_{l-1,2n})$ by putting $k_j$ and $k_l$ into their natural places and linking them together, then
$\nu(\gamma^\prime)= \nu(\gamma)\cdot c(\sigma(k_j),\sigma(k_l))$ and $c(\gamma^\prime)=c(\gamma)+l-1-j$, as there are $l-1-j$ elements of $K$ between $k_j$ and $k_l$ and none of the elements that are between $k_j$ and $k_l$ is linked with any other from these elements. (That is, there do not exist $k_\alpha<k_\beta$, both from $\{k_{j+1},\ldots,k_{l-1}\}$, such that $(k_\alpha,k_\beta)\in\gamma^\prime$ -- if they existed, one would not have $\gamma^\prime\in\FFF_c(K,A_{l-1,2n})$ because of $\varepsilon(\beta)=1$. So ascending lines that correspond to all the $l-1-j$ elements cross the lines joining $k_j$ and $k_l$.) Hence
\begin{equation*}
\is{k_1^-,\ldots,k_m^-,k_{m+1},\ldots,k_{2n}}=\sum_{l=m+1}^{n+1}\sum_{j=1}^{l-1}\sum_{\gamma^\prime} q^{c(\gamma^\prime)}\cdot\nu(\gamma^\prime)
\end{equation*}
with $\gamma^\prime$'s running through the set of all complete Feynman diagrams on $K$ with $k_j$ and $k_l$ linked together, i.e. such that $(k_j,k_l)\in\gamma$.


Since all the $\gamma^\prime$'s from the formula above are in fact all the Feynman diagrams on $K$ compatible with $A_{m,2n}$, we get \eqref{nadiagramach} with $m\in\{1,\ldots,n\}$.

\end{proof}

\begin{proof}[Solution of Problem \ref{t:qnormal}]
If $k$ is an odd number then
\begin{equation*}
\wo\left[X_{\sigma(1)}\ldots X_{\sigma(k)}\right]=\is{1,\ldots,k}=\is{1^-,2,\ldots,k}=0
\end{equation*}
by Corollary \ref{c:nieparzyste}. If $k=2n$ for some $n\in\nat$ then
\begin{equation*}
\wo\left[X_{\sigma(1)}\ldots X_{\sigma(k)}\right]=\is{1,\ldots,2n}=\is{1^-,2,\ldots,2n}=\sum_{\gamma\in\FFF_c(\{1,\ldots,2n\},A_{1,2n}) } q^{c(\gamma)}\cdot\nu(\gamma) ,
\end{equation*}
the second equality follows from \eqref{sprzez} and the assumptions of the Theorem, and the third one is a consequence of Proposition \ref{p:maincombi}. Since $$\FFF_c(\{1,\ldots,2n\},A_{1,2n})=\FFF_c(\{1,\ldots,2n\},C(2n))=\FFF_c(\{1,\ldots,2n\}),$$ we finally arrive at the following formula:
\begin{equation*}
\wo\left[X_{\sigma(1)}\ldots X_{\sigma(k)}\right]=
\begin{cases}
0& \textrm{if } k\textrm{ is an odd number,}\\
\sum_{\gamma\in\FFF_c(\{1,\ldots,2n\}) } q^{c(\gamma)}\cdot\nu(\gamma)  & \textrm{if } k=2n.
\end{cases}
\end{equation*}
\end{proof}

Thus, by Theorem \ref{t:q-Wick}, we get the following

\begin{corollary}
Random variables $X_1,\ldots,X_d$ from Problem \ref{t:qnormal} are moment equal to $q$-Gaussian random variables. In fact, by Theorem \ref{t:characterization}, the assumptions of Problem \ref{t:qnormal} characterize random variables that are moment equal to $q$-Gaussians.
\end{corollary}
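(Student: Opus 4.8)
The plan is to handle the two assertions of the corollary in turn: the first by comparing moment formulas, the second through Theorem~\ref{t:characterization}.

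For the first assertion I would simply combine the two moment formulas already at hand. The Solution of Problem~\ref{t:qnormal} shows that $\wo[X_{\sigma(1)}\ldots X_{\sigma(k)}]$ vanishes when $k$ is odd and equals $\sum_{\gamma\in\FFF_c(\{1,\ldots,2n\})}q^{c(\gamma)}\cdot\nu(\gamma)$ when $k=2n$. Since the symmetric matrix $(c(i,j))_{i,j=1}^d$ is positive definite, choose vectors $f_1,\ldots,f_d$ with $\iloskal{f_i}{f_j}=c(i,j)$ and form the $q$-Gaussians $Y_i:=x(f_i)=a^+(f_i)+a^-(f_i)$ on $\Gamma_q(\zesp^d)$. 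A direct computation gives $\wo^\prime[Y_iY_j]=\iloskal{f_i}{f_j}=c(i,j)$, so by the $q$-Wick Theorem~\ref{t:q-Wick} the mixed moments of $Y$ are given by exactly the same expression~\eqref{q-Wick}. Hence $X$ and $Y$ are moment equal, which is the first assertion.

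For the characterization the ``if'' direction is precisely this first assertion, so it remains to show that a random vector $W=(W_1,\ldots,W_d)$ which is moment equal to a $q$-Gaussian vector must satisfy the hypotheses of Problem~\ref{t:qnormal}. Realize the $q$-Gaussian vector concretely as $Y_i=x(f_i)$ on $\Gamma_q(H)$, put $c(i,j):=\iloskal{f_i}{f_j}=\wo^\prime[Y_iY_j]$, and note that by Theorem~\ref{t:characterization} it is enough to verify the three relations of Problem~\ref{t:qnormal} for the annihilation--preservation--creation decomposition of $Y$ itself, since Theorem~\ref{t:characterization} then transports them to $W$. To compute that decomposition I would identify the homogeneous chaos space $G_n$ generated by $Y_1,\ldots,Y_d$ with the $n$-particle subspace of $\Gamma_q(H)$: each $x(f_i)$ maps the $n$-particle space into the sum of the $(n-1)$- and $(n+1)$-particle spaces, so $F_m$ lies in the sum of the particle spaces of order $\le m$, while the top-degree terms $a^+(f_{i_1})\cdots a^+(f_{i_m})\phi=f_{i_1}\otimes\cdots\otimes f_{i_m}$ already span the $m$-particle space; an induction on $m$ then gives equality. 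Consequently $a_Y^-(i)=a^-(f_i)$, $a_Y^+(i)=a^+(f_i)$ and $a_Y^0(i)=0$, and the Bo\.{z}ejko--Speicher relation $\qc{a^-(f_i)}{a^+(f_j)}=\iloskal{f_i}{f_j}\,I=c(i,j)\,I$, together with $\qc{a^-(f_i)}{0}=0$ and $\wo^\prime[Y_i]=\iloskal{f_i}{\phi}=0$, are exactly the hypotheses of Problem~\ref{t:qnormal}.

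The only step I expect to need real care is the identification $G_n=(\text{$n$-particle space of }\Gamma_q(H))$ --- equivalently, that on the $q$-Fock space the chaos-decomposition operators attached to $x(f_i)$ coincide with the creation and annihilation operators of \cite{bosp} and that the preservation part is zero. For $-1<q<1$ this is handled by the degree/dimension induction sketched above (or by invoking the $q$-Hermite polynomials, which form an orthogonal basis adapted to the particle grading); the endpoints $q=\pm1$ first require passing to the quotient by the kernel of $\iloskal{\cdot}{\cdot}_q$, after which the same identification survives. The remaining issues --- staying within the real part of the complexification, and arranging the $f_i$ to be linearly independent so that $(c(i,j))$ is genuinely positive definite --- are routine and do not affect the moment computation.
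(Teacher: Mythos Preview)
Your proposal is correct and follows the same route the paper implicitly takes: the paper states the corollary as an immediate consequence of the Solution of Problem~\ref{t:qnormal} combined with the $q$-Wick Theorem~\ref{t:q-Wick}, and then invokes Theorem~\ref{t:characterization} for the characterization without further argument. Your write-up supplies the one detail the paper leaves tacit---namely that the abstract chaos-decomposition operators $a_Y^\pm(i),\,a_Y^0(i)$ attached to the $q$-Gaussians $Y_i=x(f_i)$ coincide with the Fock-space operators $a^\pm(f_i),\,0$---which is exactly what justifies transporting the hypotheses of Problem~\ref{t:qnormal} back to any $W$ moment equal to $Y$ via Theorem~\ref{t:characterization}.
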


\subsection{Concluding remarks}\label{s:concluding}

Problem \ref{t:qnormal} covers some important distributions as its special cases.

If $q=1$ then we have the usual commutator $[A,B]=AB-BA$ and $q$-Gaussian random variables may be identified with jointly Gaussian random variables. Unsurprisingly, from the solution of Problem \ref{t:qnormal} we get the following

\begin{corollary}\label{c:multinormal}
If $X_i\in\mathcal{A}$ are such that for all $i,j\in\{1,\ldots,d\}$
\begin{eqnarray*}
\left[\am{i};\ap{j}\right]&=&c(i,j)\cdot I,\\
\left[\am{i};\az{j}\right]&=&0,\\
\wo[X_i]&=&0,
\end{eqnarray*}
then
\begin{equation*}
\wo\left[X_{\sigma(1)}\ldots X_{\sigma(2n)}\right]=\sum_{\gamma\in\FFF_c(\{1,\ldots,2n\})}\nu(\gamma),
\end{equation*}
and $\wo\left[X_{\sigma(1)}\ldots X_{\sigma(2n+1)}\right]=0$.
\end{corollary}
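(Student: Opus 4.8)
The plan is to observe that Corollary~\ref{c:multinormal} is nothing more than the $q=1$ specialization of Problem~\ref{t:qnormal}, so there is essentially nothing new to prove --- one only reads off the closed form already obtained. First I would note that the ordinary commutator $[A,B]=AB-BA$ is exactly $\qc{A}{B}$ evaluated at $q=1$; hence the three hypotheses of the corollary ($[\am{i};\ap{j}]=c(i,j)\cdot I$, $[\am{i};\az{j}]=0$, $\wo[X_i]=0$) are precisely the hypotheses of Problem~\ref{t:qnormal} with $q=1$, and $q=1$ is an admissible value there since the range of the parameter is $[-1,1]$.

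Next I would invoke the solution of Problem~\ref{t:qnormal}: for every $k\in\nat$ and $\sigma(1),\ldots,\sigma(k)\in\{1,\ldots,d\}$ one has
\[
\wo\left[X_{\sigma(1)}\ldots X_{\sigma(k)}\right]=
\begin{cases}
0 & \textrm{if $k$ is odd,}\\
\sum_{\gamma\in\FFF_c(\{1,\ldots,2n\})} q^{c(\gamma)}\cdot\nu(\gamma) & \textrm{if $k=2n$.}
\end{cases}
\]
Putting $q=1$ makes $q^{c(\gamma)}=1$ for every complete Feynman diagram $\gamma$, so the right-hand side collapses to $\sum_{\gamma\in\FFF_c(\{1,\ldots,2n\})}\nu(\gamma)$, which is exactly the asserted Wick/Isserlis formula; the vanishing of the odd moments carries over verbatim. (Combined with Theorem~\ref{t:q-Wick} at $q=1$ this is the classical moment formula for the multivariate normal law, as indicated in Subsection~\ref{s:concluding}.)

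The only point deserving a remark --- and the closest thing to an obstacle --- is that Problem~\ref{t:qnormal} carries the extra hypothesis that $(c(i,j))_{i,j=1}^d$ be positive definite, which is absent from Corollary~\ref{c:multinormal}. This is harmless: positive definiteness is never used in the degree-reduction algorithm of Theorem~\ref{t:characterization} nor in Lemma~\ref{l:mechanizmy}, Corollary~\ref{c:nieparzyste} and Proposition~\ref{p:maincombi}, which together yield the moment formula above; it is invoked only afterwards, to identify the $X_i$ with genuine $q$-Gaussian operators through the $q$-Wick theorem. Thus the moment formula holds under the hypotheses of the corollary as stated, and that finishes the proof. (If one wishes to quote the solution of Problem~\ref{t:qnormal} word for word instead, one may simply append ``$(c(i,j))$ positive definite'' to the statement of the corollary, with no loss in the intended applications.)
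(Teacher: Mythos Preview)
Your proposal is correct and matches the paper's own argument: the corollary is presented there as an immediate consequence of the solution to Problem~\ref{t:qnormal} specialized to $q=1$, with no separate proof given. Your additional remark that the positive-definiteness hypothesis on $(c(i,j))$ plays no role in the derivation of the moment formula is accurate and goes slightly beyond what the paper spells out.
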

This is the celebrated Wick's formula, which is a special case of Theorem \ref{t:q-Wick}. Since Gaussian distributions are determined by moments, the assumptions of Corollary \ref{c:multinormal} characterize zero mean $d$-dimensional normal vectors (in particular, when $d=1$ we get the solution of Example 4.1 from \cite{stanwhitaker}).

Another interesting case is $q=0$ with $[A,B]_0=AB$, and with such convention in the $q$-Wick Theorem that $q^c=0$ for $c\ne0$, and $q^0=1$. A Feynman diagram $\gamma$ is \emph{noncrossing} if $c(\gamma)=0$. If $\NC(S)$ denotes the noncrossing diagrams on a set $S$, then $0$-version of the $q$-Wick formula reads: if $k=2n$ then
\begin{equation}\label{0Wick}
\wo\left[X_{\sigma(1)}\ldots X_{\sigma(k)}\right]=\sum_{\gamma\in\NC(\{1,\ldots,2n\})} \nu(\gamma);
\end{equation}
if $k$ is odd then the expectation vanishes.
Since \eqref{0Wick} defines\emph{ a semicircular family of covariance} $(c(i,j))_{i,j=1,\ldots,d}$ (see \cite[Definition 8.15]{nicaspeicher}), the solution of Problem \ref{t:qnormal} gives the following result:
\begin{corollary}
If $X_i\in\mathcal{A}$ are such that for all $i,j\in\{1,\ldots,d\}$
\begin{eqnarray*}
\left[\am{i};\ap{j}\right]_{0}&=&c(i,j)\cdot I,\\
\left[\am{i};\az{j}\right]_{0}&=&0,\\
\wo[X_i]&=&0,
\end{eqnarray*}
then they are moment equal to a semicircular family of covariance $(c(i,j))_{i,j=1,\ldots,d}$.
\end{corollary}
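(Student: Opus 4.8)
The plan is to read this off as the $q=0$ specialization of the Solution of Problem~\ref{t:qnormal}, in complete analogy with the way Corollary~\ref{c:multinormal} was obtained for $q=1$. First I would note that, with the convention $[A;B]_0=AB$, the three displayed hypotheses are exactly the hypotheses of Problem~\ref{t:qnormal} taken at $q=0$; the positive-definiteness of $(c(i,j))_{i,j=1}^d$ that Problem~\ref{t:qnormal} requires is implicit here, since it is part of what it means to speak of a semicircular family with covariance $(c(i,j))_{i,j=1}^d$. Consequently the moment formula established in the Solution of Problem~\ref{t:qnormal} applies verbatim to $X_1,\ldots,X_d$.

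Next I would substitute $q=0$ into that formula, using the convention $q^0=1$ and $q^c=0$ for $c\neq0$ already adopted in Subsection~\ref{s:concluding}. The weight $q^{c(\gamma)}$ then annihilates every complete Feynman diagram of positive crossing number and assigns weight $1$ to each diagram with $c(\gamma)=0$; by definition the latter are precisely the noncrossing diagrams in $\NC(\{1,\ldots,2n\})$. Hence the even mixed moments collapse to $\sum_{\gamma\in\NC(\{1,\ldots,2n\})}\nu(\gamma)$ and the odd mixed moments vanish --- which is exactly the right-hand side of \eqref{0Wick}.

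Finally I would invoke \cite[Definition~8.15]{nicaspeicher}: formula \eqref{0Wick} is the defining relation for the joint moments of a semicircular family of covariance $(c(i,j))_{i,j=1}^d$, and such a family exists (for instance realized as $0$-Gaussians on the full Fock space, as recalled in the subsection on $q$-Fock spaces). Since $X_1,\ldots,X_d$ and this semicircular family then share all mixed moments, they are moment equal in the sense of the definition given in Section~2, which is the assertion. I do not expect any genuine obstacle: the only points needing a little care are the degenerate arithmetic of $q^{c(\gamma)}$ at $q=0$ and the bookkeeping identity ``$c(\gamma)=0$ if and only if $\gamma$ is noncrossing'', and all the substantive work has already been carried out in the Solution of Problem~\ref{t:qnormal}.
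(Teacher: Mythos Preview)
Your proposal is correct and follows exactly the paper's approach: the paper also derives the corollary by specializing the Solution of Problem~\ref{t:qnormal} to $q=0$ with the convention $q^c=0$ for $c\neq0$, $q^0=1$, so that only noncrossing diagrams survive and \eqref{0Wick} results, and then identifying \eqref{0Wick} as the defining moment formula for a semicircular family via \cite[Definition~8.15]{nicaspeicher}. Your added remarks about positive-definiteness and the explicit existence of a semicircular family on the full Fock space are small elaborations beyond what the paper spells out, but they change nothing in the argument.
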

By Theorem \ref{t:characterization}, both Corollaries are in fact characterizations. When $q=-1$ one gets a similar characterization of the Fermi field (which leads to $\pm1$-valued symmetric random variables in classical probability).

\subsection*{Acknowledgement} The authors thank Jacek Weso{\l}owski for encouragement and discussions, and Pawe{\l} Naroski for technical help. We have benefited from remarks of Kamil Szpojankowski. A part of the material presented in this paper formed the first author's Master Thesis in Politechnika Warszawska.

\bibliographystyle{amsplain}
\bibliography{c:/w/wm/res/var/toolbox/WM}

\end{document}